\newcommand{\open}{\Bbb}
\newcommand{\oN}{{\open N}}
\def\vdashd{\vdash_{\mathcal{D}}}
\def\qed{$\Box$\medskip
}
\def\dom{{\rm Dom}}
\def\D{\cal D}
\def\dep{=\!\!}
\newtheorem{theorem}{Theorem}
\newtheorem{definition}[theorem]{Definition}
\newtheorem{lemma}[theorem]{Lemma}
\newtheorem{proposition}[theorem]{Proposition}
\newtheorem{corollary}[theorem]{Corollary}
\newtheorem{example}[theorem]{Example}
\def\={=\!\!}
\newcommand{\df}{\mathcal{D}}
\newcommand{\rel}{rel}
\newcommand{\mA}{{\mathfrak A}}
\newcommand{\FO}{{\rm FO}}
\newcommand{\Fr}{{\rm Fr}}
\newcommand{\Var}{{\rm Var}}
\newcommand{\len}{{\rm len}}
\title{Axiomatizing first order consequences in dependence logic}
\author{Juha Kontinen\thanks{Supported by grant 127661 of the Academy of Finland}\\  \and Jouko V\"a\"an\"anen\thanks{Research partially supported by
grant 40734 of the Academy of Finland and by the EUROCORES LogICCC LINT programme.}}
\begin{document}
\maketitle

\begin{abstract}
Dependence logic, introduced in \cite{MR2351449}, cannot be axiomatized. However, first-order consequences of dependence logic sentences can be axiomatized, and this is what we shall do in this paper. We give an explicit axiomatization and prove the respective Completeness Theorem. \end{abstract}

\section{Introduction}

Dependence logic was introduced in \cite{MR2351449}. It extends ordinary first order logic by new atomic formulas $\=(x_1,...,x_n,y)$ with the intuitive meaning that the values of the variables $x_1,...,x_n$  completely determine  the value of $y$ is. This means that the relevant semantic game is a game of imperfect information. A player who picks $y$ and claims that her strategy is a winning strategy should make the choice so that if the strategy is played twice, with the same values for $x_1,...,x_n$, then the value of $y$ is the same as well.  Dependence logic cannot be axiomatized, for the set of its valid formulas is of the same complexity as that of full second order logic. However, the first order consequences of dependence logic sentences can be axiomatized. In this paper we give such an axiomatization.

Let us quickly review the reason why dependence logic cannot be effectively axiomatized. Consider the sentence
\[\theta _1: \exists z \forall x \exists y(\dep(y,x)\wedge \neg y=z).\]
We give the necessary preliminaries about dependence logic in the next section, but let us for now accept that $\theta_1$ is true in a model if and only if the domain of the model is infinite. The player who picks $y$ has to pick a different $y$ for different $x$. Although dependence logic does not have a negation in the sense of classical logic, the mere existence of $\theta_1$ in dependence logic should give a hint that axiomatization is going to be a problem. Elaborating but a little, $\theta_1$ can be turned into a sentence $\theta_2$ in the language of arithmetic which says that some elementary axioms of number theory fail or else some number has infinitely many predecessors. We can now prove that a first-order sentence $\phi$ of the language of arithmetic is true in $(\mathbb{N},+,\times,<)$ if and only if $\theta_2\vee \phi$ is logically valid (true in every model) in dependence logic.  This can be seen as follows: Suppose first $\phi$ is true in $(\mathbb{N},+,\times,<)$. Let us take an arbitrary model $M$ of the language of arithmetic. If $M\models \theta_2$, we may conclude $M\models \Theta_2\vee \phi$.  So let us assume $M\not \models \theta_2$. Thus $M$ satisfies the chosen elementary axioms of number theory and every element has only finitely many predecessors. As a consequence, $M\cong (\mathbb{N},+,\times,<)$, so $M\models \phi$, and again $M\models \theta_2\vee \phi$.
For the converse, suppose  $\theta_2\vee \phi$ is logically valid. Since 
$(\mathbb{N},+,\times,<)$ fails to satisfy $\theta_2$, we must conclude that $\phi$ is true in $(\mathbb{N},+,\times,<)$. 

The above inference demonstrates that truth in $(\mathbb{N},+,\times,<)$ can be reduced to logical validity in dependence logic. Thus, by Tarski's Undefinability of Truth argument, logical validity in dependence logic is non-arithmetical, and there cannot be any (effective) complete axiomatization of dependence logic.

The negative result just discussed would seem to frustrate any attempt to axiomatize dependence logic. However, there are at least two possible remedies. The first is to modify the semantics - this in the line adopted in Henkin's Completeness Theorem for second-order logic. For dependence logic this direction is taken in Galliani \cite{btxdoc}. The other remedy is to restrict to a fragment of dependence logic. This is the line of attack of this paper. We restrict to logical consequences $T\models \phi$, in which $T$ is in dependence logic but $\phi$ is in first-order logic. 

The advantage of restricting to $T\models \phi$, with first-order $\phi$, is that we can reduce the Completeness Theorem,  assuming that $T\cup \{\neg \phi\}$ is deductively consistent, to the problem of constructing a model for $T\cup \{\neg \phi\}$. Since dependence logic can be translated to existential second-order logic, the construction of a model for $T\cup \{\neg \phi\}$ can in principle be done in first-order logic, by translating $T$ to first-order by using new predicate symbols. This observation already shows that $T\models \phi$, for first-order $\phi$, can {\em in principle} be axiomatized. Our goal in this paper is to give an explicit axiomatization.  

The importance of an {\em explicit} axiomatization over and above the mere knowledge that an axiomatization exists, is paramount. The axioms and rules that we introduce throw light in a concrete way on logically sound inferences concerning dependence concepts. It turns out, perhaps unexpectedly, that fairly simple albeit non-trivial axioms and rules suffice.

Our axioms and rules are based on Barwise \cite{MR0465788}, where approximations of Henkin sentences, sentences which start with a partially ordered quantifier, are introduced. The useful method introduced by Barwise
builds on earlier work on game expressions by Svenonius  \cite{MR0209138}
and Vaught \cite{MR0409106}. 

By axiomatizing first order consequences we get an axiomatization of inconsistent dependence logic theories as a bonus, contradiction being itself expressible in first order logic. The possibility of axiomatizing inconsistency in IF logic---a relative of dependence logic---has been emphasized by Hintikka~\cite{MR1410063}. 

The structure of the paper is the following. After the preliminaries we present our system of natural deduction in Section 3. In Section 4 we give  a rather detailed proof of the Soundness of our system, which is not a priori obvious. Section 5 is devoted to the proof, using game expressions and their approximations, of the Completeness Theorem. The final section gives examples and open problems.

The second author is indebted to John Burgess for suggesting the possible relevance for dependence logic of the work of Barwise on approximations of Henkin formulas.

\section{Preliminaries}

In this section we define Dependence Logic ($\df$) and recall some basic results about it. 

\begin{definition}[\cite{MR2351449}]
The syntax of $\df$ extends the syntax of $\FO$, defined in terms
of $\vee$, $\wedge$, $\neg$, $\exists$ and $\forall$, by new
atomic  formulas (dependence atoms) of the form
\begin{equation}\label{dep}\dep(t_1,\ldots,t_n),
\end{equation} where $t_1,\ldots,t_n$ are terms.  For a vocabulary $\tau$, $\df[\tau]$  denotes the set of $\tau$-formulas of $\df$.
\end{definition}
The intuitive meaning of the dependence atom (\ref{dep}) is
that the value of the term $t_n$ is  functionally determined by the values of
the terms $t_1,\ldots, t_{n-1}$. As singular cases we have $\dep()$
 which we take to be universally true, and
 $\dep(t)$ meaning that the value of $t$ is constant. 

The set $\Fr(\phi)$ of free variables of a formula 
$\phi\in \df$ is defined  as for first-order logic, except that we have the new case
\[ \Fr(\dep(t_1,\ldots,t_n))=\Var(t_1)\cup\cdots \cup \Var(t_n), \] 
where $\Var(t_i)$ is the set of variables occurring in the term $t_i$. If $\Fr(\phi)=\emptyset$, 
we call $\phi$ a sentence.

In order to define the semantics of $\df$, we first need to
define the concept of a \emph{team}. Let $\mA$ be a model with domain $A$. 
 {\em Assignments} of $\mA$
are finite mappings from variables into $A$. The value of a term $t$
in an assignment $s$ is denoted by $t^{\mA}\langle s\rangle$.
If $s$ is an assignment, $x$ a variable,  and $a\in A$, then $s(a/x)$ denotes the 
assignment (with domain $\dom(s)\cup \{x\}$)  which agrees with $s$
everywhere except that it maps $x$ to $a$. 

Let $A$ be a set and $\{x_1,\ldots,x_k\}$ a finite (possibly empty) set  of
variables. 
 A {\em team} $X$ of $A$ with domain
$\dom(X)=\{x_1,\ldots,x_k\}$ is any set of assignments from the variables
$\{x_1,\ldots,x_k\}$ into the set $A$. We denote by $\rel(X)$ the
$k$-ary relation of $A$ corresponding to $X$ 
\[\rel(X)=\{(s(x_1),\ldots,s(x_k)) : s\in X \}.  \]
 If $X$ is a team of $A$,
and $F\colon X\rightarrow A$, we use $X(F/x_n)$ to denote the (supplemented) team
$\{s(F(s)/x_n) : s\in X \}$ and $X(A/x_n)$ the (duplicated) team $\{s (a/x_n)
: s\in X\ \textrm{and}\ a\in A \}$. It is convenient to adopt a shorthand notation for teams arising from successive applications of the supplementation and duplication operations, e.g., we abbreviate  $X(F_1/x_1)(A/x_2)(F_3/y_1)$ as 
$X(F_1AF_3/x_1x_2 y_1)$. 

Our treatment of negation is the following:
 We call a formula of $\df$ {\em first-order} if it does not contain any dependence atoms.   We assume that the scope of negation is always a first order formula. We could allow negation everywhere, but since negation in dependence logic is treated as dual, it would only result in the introduction of a couple of more rules of the de Morgan type in the definition of semantics, as well as in the definition of the deductive system. 

We are now ready to define the semantics of dependence logic. In this definition 
 $\mA\models_s\phi$  refers to  satisfaction in first-order logic. 
 
\begin{definition}[\cite{MR2351449}]\label{sat} Let
$\mA$ be a model and $X$ a team of $A$. The satisfaction relation
$\mA\models _X \varphi$ is defined as follows:
\begin{enumerate}
\item If $\phi$ is  first-order, then $\mA\models _X\phi$ iff for all $s\in X$,  $\mA\models_s\phi$. 

\item $\mA\models _X \dep(t_{1},\ldots,t_{n})$ iff for all $s,s'\in
X$ such that\\ $t_1^{\mA}\langle s\rangle  =t_1^{\mA}\langle
s'\rangle  ,\ldots, t_{n-1}^{\mA}\langle s\rangle
=t_{n-1}^{\mA}\langle s'\rangle  $, we have $t_n^{\mA}\langle
s\rangle  =t_n^{\mA}\langle s'\rangle  $.


\item $\mA\models _X \psi \wedge \phi$ iff $\mA\models _X \psi$ and $\mA\models _X \phi$.
\item $\mA\models _X \psi \vee \phi$ iff $X=Y\cup Z$ such that
 $\mA\models _Y \psi$  and $\mA\models _Z \phi$ .

\item   $\mA \models _X \exists x_n\psi$ iff $\mA \models _{X(F/x_n)} \psi$ for some $F\colon X\to A$.

\item $\mA \models _X \forall x_n\psi$ iff $\mA \models _{X(A/x_n)} \psi$.

\end{enumerate}
Above, we assume that the domain of $X$ contains the variables free in $\phi$. Finally, a sentence $\phi$ is true in a model $\mA$, $\mA\models \phi$,  if $\mA\models _{\{\emptyset\}} \phi$. 
\end{definition}

The truth definition of dependence logic can be also formulated in game theoretic terms \cite{MR2351449}. In terms of semantic games, the truth of $\=(x_1,\ldots,x_n,y)$ means that the player who claims a winning strategy has to demonstrate certain {\em uniformity}. This means that if the game is played twice, the player, say $\exists$, reaching both times the same subformula $\=(x_1,\ldots,x_n,y)$, then if the values of $x_1,\ldots,x_n$ were the same in both plays, the value of $y$ has to be the same, too. 

 Next we define the notions of logical consequence and  equivalence for formulas of dependence logic. 
\begin{definition}\label{equiv} Let $T$ be a set of formulas of dependence logic with only finitely many free variables. The formula  $\psi$ is a \emph{logical consequence} of  $T$ ,
\[T\models\psi,  \]
 if for all models $\mA$ and teams $X$, with $\Fr(\psi)\cup\bigcup_{\phi\in T} \Fr(\phi)\subseteq \dom(X)$, and $\mA\models_X T$ we have $\mA\models_X\psi$. The formulas  $\phi$ and $\psi$ are \emph{logically equivalent},
\[\phi\equiv \psi,   \]
 if $\phi\models \psi$ and $\psi\models \phi $. 
\end{definition}


The following basic properties of dependence logic will be extensively used in this article.

Let $X$ be a team with domain $\{x_1,\ldots,x_k\}$ and $V\subseteq \{x_1,\ldots,x_k\}$. Denote by
$X\upharpoonright V$ the team $\{s\upharpoonright V : s\in X\}$ with domain $V$. The following lemma 
 shows that
the truth of a formula  depends only on the interpretations of the variables occurring free in the formula.

\begin{proposition}\label{freevar}
Suppose $V\supseteq \Fr(\phi)$. Then $\mA \models _X\phi$ if and only if $\mA \models _{X\upharpoonright V} \phi$.
\end{proposition}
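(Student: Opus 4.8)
The plan is to proceed by induction on the structure of $\phi$, proving both implications simultaneously at each step. Throughout I write $X\upharpoonright V$ for the restricted team and lean on the elementary fact that the value $t^{\mA}\langle s\rangle$ of a term $t$ depends only on $s\upharpoonright\Var(t)$, hence only on $s\upharpoonright V$ whenever $\Var(t)\subseteq V$.

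For the base cases: if $\phi$ is first-order, then by definition $\mA\models_X\phi$ means $\mA\models_s\phi$ for every $s\in X$, and the classical first-order coincidence lemma gives $\mA\models_s\phi$ iff $\mA\models_{s\upharpoonright V}\phi$ since $V\supseteq\Fr(\phi)$; as $s$ ranges over $X$ its restrictions $s\upharpoonright V$ range exactly over $X\upharpoonright V$, so the two statements coincide. If $\phi$ is a dependence atom $\dep(t_1,\ldots,t_n)$, then $\Var(t_i)\subseteq\Fr(\phi)\subseteq V$, so every term value is unchanged by restriction; the satisfaction clause only compares such term values over pairs of assignments, and since each $s\in X$ and its restriction $s\upharpoonright V$ yield identical term values, the condition for $X$ and the condition for $X\upharpoonright V$ are equivalent. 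The conjunction case follows at once by applying the induction hypothesis to each conjunct, both having free variables inside $V$.

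The substantive cases are disjunction and the quantifiers. For $\psi\vee\phi$ the forward direction is clear: a witnessing split $X=Y\cup Z$ restricts to $X\upharpoonright V=(Y\upharpoonright V)\cup(Z\upharpoonright V)$ and the induction hypothesis applies. For the converse, given a split $X\upharpoonright V=Y'\cup Z'$, I would lift it by setting $Y=\{s\in X: s\upharpoonright V\in Y'\}$ and $Z=\{s\in X: s\upharpoonright V\in Z'\}$; one checks $X=Y\cup Z$, $Y\upharpoonright V=Y'$ and $Z\upharpoonright V=Z'$, and the induction hypothesis finishes the job. The universal quantifier is handled by applying the induction hypothesis with $V'=V\cup\{x_n\}\supseteq\Fr(\psi)$ together with the clean identity $X(A/x_n)\upharpoonright V'=(X\upharpoonright V)(A/x_n)$: duplication inserts every value of $x_n$, so collapsing assignments under restriction loses nothing. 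The same computation settles the reverse direction of the existential quantifier, since a witness $G$ on $X\upharpoonright V$ pulls back to $F(s)=G(s\upharpoonright V)$, which factors through $V$ and gives $X(F/x_n)\upharpoonright V'=(X\upharpoonright V)(G/x_n)$ exactly.

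The one genuinely delicate point, and the step I expect to be the main obstacle, is the forward direction of the existential quantifier. Here a witnessing $F\colon X\to A$ need not factor through $V$: two assignments $s_1,s_2\in X$ with $s_1\upharpoonright V=s_2\upharpoonright V$ may have $F(s_1)\neq F(s_2)$, so $X(F/x_n)\upharpoonright V'$ is in general only a subteam of the duplicated team $(X\upharpoonright V)(A/x_n)$ and not itself a supplemented team. To extract a genuine witness $G$ on $X\upharpoonright V$ I would choose, for each $t\in X\upharpoonright V$, a representative $s\in X$ with $s\upharpoonright V=t$ and set $G(t)=F(s)$, obtaining $(X\upharpoonright V)(G/x_n)\subseteq X(F/x_n)\upharpoonright V'$. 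Passing from satisfaction of $\psi$ on the larger team to the smaller subteam requires the downward closure (monotonicity) property of dependence logic, namely that $\mA\models_Y\psi$ and $Z\subseteq Y$ imply $\mA\models_Z\psi$. As this is not among the results already available, I would either interleave its proof into the same induction or first establish it as a separate lemma by a routine structural induction; with it in hand, the existential case, and hence the proposition, follows.
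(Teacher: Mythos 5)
Your induction is correct, and it is essentially the standard argument: the paper itself states this proposition without proof as a basic property imported from the dependence logic monograph, and your treatment of the one genuinely delicate step (the forward direction of $\exists x_n$, where the witness $F$ need not factor through $V$ and one must pick representatives and then invoke downward closure) is exactly right. Note only that downward closure is already available to you as Proposition~\ref{Downward closure} in the paper (and its own structural induction does not depend on the present proposition, so there is no circularity), so no interleaving is needed.
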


The following fact is also a very basic property of all formulas of dependence logic:

\begin{proposition}[Downward closure]\label{Downward closure}Let $\phi$ be a formula of dependence logic, $\mA$  a model, and $Y\subseteq X$ teams. Then $\mA\models_X \phi$ implies $\mA\models_Y\phi$. 
\end{proposition}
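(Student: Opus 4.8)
The plan is to prove the Downward Closure property by structural induction on the formula $\phi$. The statement asserts that if $\mA \models_X \phi$ and $Y \subseteq X$, then $\mA \models_Y \phi$, so I would fix the model $\mA$ and prove the implication uniformly for all pairs $Y \subseteq X$ (with $\Fr(\phi) \subseteq \dom(X)$) as I recurse through the syntax. The base cases are the atomic formulas: first-order atoms and dependence atoms.

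For the base cases, suppose $\phi$ is first-order. By clause 1 of Definition \ref{sat}, $\mA \models_X \phi$ means $\mA \models_s \phi$ for every $s \in X$; since $Y \subseteq X$, this holds in particular for every $s \in Y$, so $\mA \models_Y \phi$. For a dependence atom $\dep(t_1,\ldots,t_n)$, clause 2 gives a uniformity condition quantified over all pairs $s, s' \in X$; restricting to pairs from the smaller set $Y$ can only make the condition easier to satisfy, so it transfers to $Y$ immediately. These cases are where the monotone, ``universally quantified over assignments'' shape of the semantics does the work.

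The inductive steps treat the connectives and quantifiers. For conjunction $\psi \wedge \phi$, apply the induction hypothesis to each conjunct separately. For disjunction $\psi \vee \phi$, from $\mA \models_X \psi \vee \phi$ we get a split $X = X_1 \cup X_2$ with $\mA \models_{X_1}\psi$ and $\mA \models_{X_2}\phi$; then $Y = (Y \cap X_1) \cup (Y \cap X_2)$ is a split of $Y$, and since $Y \cap X_1 \subseteq X_1$ and $Y \cap X_2 \subseteq X_2$, the induction hypothesis yields satisfaction on each piece. For the existential quantifier $\exists x_n \psi$, there is a witnessing function $F \colon X \to A$ with $\mA \models_{X(F/x_n)}\psi$; I would restrict $F$ to $Y$ and observe that $Y(F{\restriction}Y/x_n) \subseteq X(F/x_n)$, so the induction hypothesis applies. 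For the universal quantifier $\forall x_n \psi$, from $Y \subseteq X$ we get $Y(A/x_n) \subseteq X(A/x_n)$, and the induction hypothesis finishes the step.

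The only place demanding genuine care is the existential case, where one must verify that restricting the choice function $F$ to the subteam $Y$ produces a subteam relation $Y(F{\restriction}Y/x_n) \subseteq X(F/x_n)$ that correctly feeds the induction hypothesis; the disjunction case requires the analogous but routine check that intersecting the given split with $Y$ yields a legitimate split of $Y$. Everything else is a direct unwinding of the semantic clauses, with the key structural observation being that every clause in Definition \ref{sat} is monotone under passing to subteams.
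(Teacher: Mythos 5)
Your proof is correct and is exactly the standard structural induction that the paper itself omits (it cites this as a basic property established in V\"a\"an\"anen's book); every case, including the slightly delicate existential and disjunction steps, is handled properly, and negation needs no separate treatment since in this paper it is confined to first-order formulas, which your clause-1 case already covers. Nothing is missing.
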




\section{A system of natural deduction}

We will next present  inference rules  that allow us to derive all first-order consequences of sentences of  dependence logic.

Here is the first set of rules that we will use.  The  substitution  of a term $t$ to the free occurrences of $x$ in $\psi(x)$ is denoted by $\psi(t/x)$. Analogously to first-order logic,  no variable of $t$ can become bound in such substitution.  

We use an abbreviation $\vec{x}=\vec{y}$ for the formula  $\bigwedge_{1\le i \le \len(\vec{x})} x_i=y_i$,
assuming of course  that  $\vec{x}$ and $\vec{y}$ are tuples  of the same length $\len(\vec{x})$. Furthermore, for an assignment $s$, and  a tuple of variables $\vec{x}=(x_1,\ldots, x_n)$, we  sometimes denote the tuple $(s(x_1),\ldots, s(x_n))$ by $s(\vec{x})$.

{\scriptsize
\begin{figure}
\begin{tabular}{|c|c|c|}\hline
&& \\
{\small
Operation} & {\small Introduction} & {\small Elimination
} \\
&& \\
 \hline
&&\\
Conjunction
&
$
\infer[{\mbox{\tiny$\wedge$ I}}]{A \wedge B}{A & B}$
&
$\infer[{\mbox{\tiny$\wedge$ E}}]A{A\wedge B}
\qquad
\infer[{\mbox{\tiny$\wedge$ E}}]B{A\wedge B}
$\\&&\\
\hline
&&\\
Dis\-junc\-tion
&
$
\infer[{\mbox{\tiny$\vee$ I}}]{A \vee B}{A}
\qquad
\infer[{\mbox{\tiny$\vee$ I}}]{A \vee B}{B}$
&
$\infer[{\mbox{\tiny$\vee$ E}}]{C}{
       A\vee B
       &
                                   \infer*{C}{
               [A]
       }
       &
      \infer*{C}{
               [B]
       }
       }
$

\\&&\\
&
&Condition 1.\\
&&\\
\hline

&&\\
Ne\-ga\-tion 
 
&
$
\infer[{\mbox{\tiny$\neg$ I}}]{\neg A}{
      \infer*{B\wedge\neg B}{
               [A]
       }
       }
$
&

$
\infer[{\mbox{\tiny$\neg$ E}}]{A}{\neg\neg A}
$
\\&&\\

&Condition 2.
&Condition 2. 
\\
&&\\
\hline

&&\\
Uni\-ver\-sal quantifier
&
$
\infer[{\mbox{\tiny$\forall$ I}}]{\forall x_iA}{
      A
       }

$
&
$
\infer[{\mbox{\tiny$\forall$ E}}]{A(t/x_i)}{\forall x_i A}
$
\\&&\\
&Condition 3.
&
\\
&&\\
\hline
&&\\
Existential quantifier
&
$
\infer[{\mbox{\tiny$\exists$ I}}]{\exists x_iA}{
      A(t/x_i)
       }

$
&
$
\infer[{\mbox{\tiny$\exists$ E}}]{B}{
       \exists x_iA
       &
                                   \infer*{B}{
               [A]
       }
       }$
\\&&\\
&
&Condition 4.\\
&&\\
\hline
\multicolumn{3}{|l|}{  }\\
\multicolumn{3}{|l|}{\ Condition 1. $C$ is first-order.
}\\
\multicolumn{3}{|l|}{\ Condition 2. The formulas are first-order.
}\\
\multicolumn{3}{|l|}{\ Condition 3. The variable \(x_i\) cannot appear free  in any non-discharged assumption
}\\
\multicolumn{3}{|l|}{\ used in the derivation of \(A\).}\\
 \multicolumn{3}{|l|}{\ Condition 4. The variable
 \(x_i\) cannot appear free  in \(B\) and in any non-discharged  
}\\
\multicolumn{3}{|l|}{\ assumption used in the derivation of  \(B\),
except in \(A\).}\\
\multicolumn{3}{|l|}{  }\\
\hline
\end{tabular}
\caption{The first set of rules.\label{ded}}
\end{figure}}

In addition to the rules of Figure~\ref{ded}, we also adopt the following rules:
\begin{definition}

\begin{enumerate}

 
\item\label{rule1} Disjunction substitution: 

\[
\infer[]{A\vee C}{
        A\vee B
       &
                                   \infer*{C}{
               [B]
       }
       }\]

\item\label{rule2}  Commutation and associativity of disjunction: 
\[\infer{A\vee B}{B\vee A}\hspace{19mm}\infer{A\vee(B\vee C)}{(A\vee B)\vee C}\]

\item\label{rule3} Extending scope: 
\[\infer{ \forall x (A \vee B)}{\forall x A \vee B}   \]
 where the  prerequisite for applying this rule is that $x$ does not appear free in $B$.

\item\label{rule4}  Extending scope:
\[\infer{ \exists x (A \vee B)}{\exists x A \vee B}   \]
 where the  prerequisite for applying this rule is that $x$ does not appear free in $B$.

\item\label{rule5} Unnesting:
\[\infer{\exists z( \dep(t_1,...,z,...,t_n)\wedge z=t_i)}{\dep(t_1,...,t_n)}   \]
 where $z$ is a new variable.

\item\label{rule6}  Dependence distribution: let  
\begin{eqnarray*}
A &=& \exists y_1\ldots \exists y_n(\bigwedge_{1\le j\le n}\dep(\vec{z}^j,y_j)\wedge C),\\
B &=& \exists y_{n+1}\ldots \exists y_{n+m}(\bigwedge_{n+1\le j\le n+m}\dep(\vec{z}^j,y_j)\wedge D).
\end{eqnarray*}
where $C$ and $D$ are quantifier-free formulas without dependence atoms, and $y_i$, for $1\le i \le n$, does not appear in $B$ and $y_i$, for $n+1\le i \le n+m$, does not appear in $A$.
Then,
\[\infer{\exists y_1 \ldots \exists y_{n+m}(\bigwedge_{1\le j\le n+m}\dep(\vec{z}^j,y_j)\wedge (C\vee D))}{A\vee B}   \]Note that the logical form of this rule is:
\[\infer{\exists \vec{y}\exists \vec{y'}(\bigwedge_{1\le j\le n+m}\dep(\vec{z}^j,y_j)\wedge (C\vee D))}{\exists  \vec{y}(\bigwedge_{1\le j\le n}\dep(\vec{z}^j,y_j)\wedge C)\vee \exists \vec{y'}(\bigwedge_{n+1\le j\le n+m}\dep(\vec{z}^j,y_j)\wedge D)}   \]

\item\label{rule7} 
Dependence introduction:
\[\infer{\forall y\exists x (\dep(\vec{z},x)\wedge A)}{\exists x \forall y A}   \]
 where $\vec{z}$ lists the variables in $\Fr(A)- \{x,y\}$.

\item\label{rule8}  Dependence elimination:
\[ \infer{
\begin{array}{l}
\forall\vec{x_0}\exists\vec{y_0}(B(\vec{x_0},\vec{y_0})\wedge\\
\forall\vec{x_1}\exists\vec{y_1}(B(\vec{x_1},\vec{y_1})\wedge
\bigwedge_{=\!(\vec{w}_0^p,y_{0,p})\in S}(\vec{w}_0 ^p=\vec{w}_1 ^p\to y_{0,p}=y_{1,p})))
\end{array} 
 }{\forall\vec{x_0}\exists\vec{y_0} (\bigwedge_{1\le j\le k}\dep(\vec{w}^{i_j},y_{0,{i_j}})\wedge B(\vec{x_0},\vec{y_0})),
}\]
where $\vec{x_l}=(x_{l,1},\ldots,x_{l,m})$ and $\vec{y_l}=(y_{l,1},\ldots,y_{l,n})$ for $l \in \{0,1\}$ ($\vec{w}_0^p$ and $\vec{w}_1^p$ are related analogously),  and
 the variables in $\vec{w}^{i_j}$ are contained in the set
\[  \{x_{0,1},\ldots,x_{0,m},y_{0,1},\ldots,y_{0,i_j-1} \}  .\]
 Furthermore, the set $S$  contains the conjuncts of 
\[\bigwedge_{1\le j\le k}\dep(\vec{w}^{i_j},y_{0,{i_j}}), \]
and  the dependence atom $\dep(x_{0,1},\ldots,x_{0,m},y_{0,p})$ for each of the variables $y_{0,p}$ ($1\le p \le n$) such that $y_{0,p} \notin \{y_{0,i_1},\ldots, y_{0,i_k}\}$. 

\item The usual identity axioms.




\end{enumerate}
\end{definition}
It is worth noting that  the elimination rule for disjunction is not correct in the context of dependence logic. Therefore we have to assume the  rules 1-4 regarding disjunction, which  are easily derivable in first-order logic.   Note also that the analogues of the rules 1-4 for conjunction need not be assumed since they are easily derivable from the other rules.   

Note that we do not assume the so called Armstrong's Axioms for dependence atoms. If we assumed them, we might be able to simplify the dependence elimination and the dependence distribution rules, but we have not pursued this line of thinking.

\section{The Soundness Theorem}

In this section we show that the  inference rules defined in the previous section are sound for dependence logic. 

\begin{proposition} Let $T\cup\{\psi\}$ be a set of formulas of dependence logic.  If $T\vdashd \psi$, then $T \models \psi$.
\end{proposition}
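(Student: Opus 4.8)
The plan is to prove soundness by the standard strategy for natural deduction systems: show that each inference rule \emph{individually} preserves logical consequence, and then argue that any derivation, being a finite composition of rule applications, inherits soundness by induction on the length (or height) of the derivation. Concretely, I would set up the induction so that the inductive hypothesis applies to the immediate subderivations feeding into the last rule used, together with their discharged-assumption bookkeeping; the base case is that an assumption $\phi$ trivially satisfies $\{\phi\}\models\phi$. The work thus reduces entirely to verifying, rule by rule, that the displayed conclusion is a logical consequence (in the team-semantics sense of Definition~\ref{equiv}) of the displayed premises, under whatever side condition the rule carries.

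For the rules I would group them by difficulty. The structural and quantifier rules from Figure~\ref{ded}, together with the disjunction rules (\ref{rule1})--(\ref{rule4}), commutation/associativity, and the identity axioms, are the routine cases: each follows by unwinding the satisfaction clauses of Definition~\ref{sat} on an arbitrary model $\mA$ and team $X$ with the appropriate free variables in $\dom(X)$, using Proposition~\ref{freevar} to justify the side conditions on the eligible variables (Conditions~3 and~4, and the freeness prerequisites of rules~\ref{rule3} and~\ref{rule4}). Here the key subtlety, already flagged in the paper, is that $\vee$-elimination is restricted by Condition~1 to a first-order conclusion $C$: I would note that on first-order formulas team satisfaction is just universal satisfaction across the team (clause~1 of Definition~\ref{sat}), so the splitting $X=Y\cup Z$ supplied by the $\vee$ clause lets the two subderivations cover all of $X$, which is exactly what makes the restricted rule sound even though the unrestricted one is not. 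Downward closure (Proposition~\ref{Downward closure}) is the other workhorse, needed whenever a subteam is passed to a subderivation.

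The genuinely substantial cases are the dependence-specific rules~\ref{rule5}--\ref{rule8}. For unnesting (\ref{rule5}) and dependence introduction (\ref{rule7}) I would directly exhibit the witnessing functions $F$ demanded by the existential clause: in (\ref{rule7}), given a witness $F$ for $\exists x$ in $\exists x\forall y\,A$ that does not depend on the universally quantified $y$, the uniformity condition defining $\dep(\vec z,x)$ is satisfied precisely because $F$ is constant in $y$, and $\vec z$ lists exactly the remaining free variables. Dependence distribution (\ref{rule6}) requires merging two split teams and checking that the combined choice functions still witness all the dependence atoms simultaneously; the hypotheses that the $y_i$ ranges are disjoint between $A$ and $B$ are what prevent cross-interference. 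I expect the main obstacle to be the soundness of dependence elimination (\ref{rule8}): this rule replaces genuine dependence atoms $\dep(\vec w^{i_j},y_{0,i_j})$ by an unfolded first-order statement comparing two independent rounds of choices $(\vec x_0,\vec y_0)$ and $(\vec x_1,\vec y_1)$, and verifying it sound amounts to showing that the functional-dependence requirement of the team semantics is faithfully captured by the first-order uniformity clauses $\vec w_0^p=\vec w_1^p\to y_{0,p}=y_{1,p}$ over \emph{all} pairs of plays. The careful point will be matching the variable-inclusion hypothesis on $\vec w^{i_j}$ and the exact content of the set $S$ (including the added atoms for the $y_{0,p}$ outside the distinguished list) to the team-semantic meaning of the premise, so that no spurious dependence is asserted and none is lost; this is where I would spend the bulk of the argument.
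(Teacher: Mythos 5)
Your plan is correct and follows essentially the same route as the paper's proof: induction on the length of the derivation, rule-by-rule verification against the team semantics, with exactly the same key observations (the first-order restriction on $\vee$-elimination combined with the splitting $X=Y\cup Z$ and clause~1 of Definition~\ref{sat}; Proposition~\ref{freevar} for the variable side conditions; downward closure for subteams; and explicit witness functions for rules~\ref{rule7} and~\ref{rule8}, the latter being where the paper too spends most of its effort). The only refinement worth noting is in rule~\ref{rule7}: ``$F$ is constant in $y$'' only yields dependence on \emph{all} other variables in $\dom(X)$, so one must first restrict the team to $\Fr(A)-\{x,y\}$ via Proposition~\ref{freevar} to obtain $\dep(\vec z,x)$ for $\vec z$ listing just the free variables of $A$ --- which is precisely how the paper handles it.
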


\begin{proof} We will prove the claim using  induction on the length of derivation. The soundness of the rules $\neg$ E, and 2-6
follows from the corresponding logical equivalences proved in \cite{MR2351449} and \cite{ADJK} (rules 5-6).  
Furthermore, the soundness of the rules  $\wedge$ E, $\wedge$ I, $\vee$ I, and rule 1 is obvious. We consider the remaining rules below. The following
lemma is needed in the proof.


\begin{lemma}\label{terms} Let $\phi(x)$ be a formula, and $t$ a term such that in the substitution $\phi(t/x)$ no variable of $t$ becomes bound. Then for all $\mA$ and teams $X$, where $(\Fr(\phi)-\{x\})\cup \Var(t)\subseteq \dom(X)$\[  \mA\models _X \phi(t/x) \Leftrightarrow   \mA\models _{X(F/x)} \phi(x),  \]
where $F\colon X\rightarrow A$ is defined by  $F(s)= t^{\mA}\langle s\rangle$.
\end{lemma}
\begin{proof} Analogous to Lemma 3.28 in  \cite{MR2351449}. 
\end{proof}


\begin{itemize}
\item[$\vee$ E]    Assume that we have a natural deduction proof of a first-order formula $C$ from the assumptions 
\begin{equation*}
 \{A_1,\ldots,A_k\}
\end{equation*}
with the last rule $\vee$ E applied to  $A\vee B$.  Let $\mA$ and $X$ be such that  $\mA\models _X A_i$, for $1\le i\le k$. By the assumption, we have a shorter deduction of  $A\vee B$ from the same assumptions, and deductions
of $C$ from both  of the sets $\{A,A_1,\ldots,A_k\}$ and $\{B,A_1,\ldots,A_k\}$. By the induction assumption, we get that  $\mA\models _X A\vee B$, and hence $X=Y\cup Z$ with $\mA\models _Y A$ and $\mA\models _Z B$. Let $s\in X$, e.g. $s\in Y$. We know   $\mA\models _Y A$. Thus by the induction assumption, we get that  
$\mA\models _Y C$, and therefore $\mA\models_s C$. Analogously, if $s\in Z$, then since  $\mA\models _Z B$ we get $\mA\models _Z C$, and therefore $\mA\models_s C$. In either case $\mA\models _s C$, hence  $\mA\models _X C$ as wanted.

\item[$\neg$ I]  Assume that we have a natural deduction proof of a first order formula $\neg A$ from the  assumptions 
\begin{equation*}
 \{A_1,\ldots,A_k\}
\end{equation*}
with the last rule $\neg $ I.  Let $\mA$ and $X$ be such that  $\mA\models _X A_i$, for $1\le i\le k$. By the assumption, we have a shorter deduction of  $B\wedge\neg B$ from the assumptions $ \{A,A_1,\ldots,A_k\}$. We claim that now $\mA\models _X \neg A$, i.e., $\mA\models _s \neg A$ for all $s\in X$. For contradiction, assume that  $\mA\not\models _s \neg A$ for some $s\in X$. Then $\mA\models _s \ A$. By Proposition \ref{Downward closure}, we get that   $\mA\models _{\{s\}} A_i$, for $1\le i\le k$. Now, by the induction assumption, we get that  $\mA\models _{s} B\wedge\neg B$ which is a contradiction.

\item[ $\exists$ E]  Assume that we have a natural deduction proof of $\theta$ from the assumptions 
\begin{equation}\label{Ass}
 \{A_1,\ldots,A_k\}
\end{equation}
 with last rule $\exists$ E. Let $\mA$ and $X$ be such that  $\mA\models _X A_i$, for $1\le i\le k$.
 
By the assumption, we have shorter proofs of a formula of the form $\exists x \phi$  from the assumptions \eqref{Ass} and of $\theta$ from 
\[\{\phi, A_{i_1},\ldots,A_{i_l}\},\]
where  $\{A_{i_1},\ldots,A_{i_l}\}\subseteq  \{A_1,\ldots,A_k\}$.
Note that the variable $x$ cannot appear free in  $\theta$  and  in $A_{i_1},\ldots,A_{i_l}$. By the induction assumption, we get that  $\mA\models _X \exists x \phi$, hence 
\begin{equation}\label{A} 
\mA\models _{X(F/x)} \phi
\end{equation} 
for some $F\colon X\rightarrow A$. Since $x$ does not appear free in the formulas $A_{i_j}$, Proposition \ref{freevar} implies that    
\begin{equation}\label{BBB}
\mA\models _{X(F/x)} A_{i_j}
\end{equation} 
 for $1\le j\le l$. By \eqref{A} and \eqref{BBB}, and   the induction assumption, we get that $\mA\models _{X(F/x)} \theta$ and, since $x$ does not appear free in $\theta$, it follows again by Proposition \ref{freevar} that $\mA\models _{X} \theta$.

\item[$\exists$ I]  


Assume that we have a natural deduction proof of $\exists x\psi$ from the assumptions 
\begin{equation*}
 \{A_1,\ldots,A_k\}
\end{equation*}
 with last rule $\exists$ I. Let $\mA$ and $X$ be such that  $\mA\models _X A_i$, for $1\le i\le k$. By the assumption, we have a shorter proof of  $\psi(t/x)$ from the same assumptions. By the induction assumption, we get that
 $\mA\models _X \psi(t/x)$. Lemma \ref{terms} now implies that $\mA\models _{X(F/x)} \psi(x)$, where 
 $F(s)= t^{\mA}\langle s\rangle$. Therefore, we get $\mA\models _X \exists x\psi$.

\item[ $\forall$ E]

Assume that we have a natural deduction proof of $\psi(t/x)$ from the assumptions 
\begin{equation*}
\{A_1,\ldots,A_k\}
\end{equation*}
with last rule $\forall$ E. Let $\mA$ and $X$ be such that  $\mA\models _X A_i$, for $1\le i\le k$. By the assumption, we have a shorter proof of  $\forall x\psi$ from the same assumptions. By the induction assumption, we get that
$\mA\models _X \forall x\psi$ and hence 
\begin{equation}\label{firststep}
\mA\models _{X(A/x)} \psi(x). 
\end{equation}
We need to show  $\mA\models _X\psi(t/x)$. We can use  Lemma \ref{terms} to show this: by Lemma \ref{terms}, it suffices to show that  $\mA\models _{X(F/x)}\psi(x)$. But now obviously $ X(F/x)\subseteq  X(A/x)$, hence   $\mA\models _{X(F/x)}\psi(x)$ follows using \eqref{firststep} and   Proposition \ref{Downward closure}.


\item[$\forall$ I] 
 Assume that we have a natural deduction proof of $\forall x\psi$ from the assumptions 
\begin{equation*}
 \{A_1,\ldots,A_k\}
\end{equation*}
 with last rule $\forall$ I. Let $\mA$ and $X$ be such that  $\mA\models _X A_i$, for $1\le i\le k$. By the assumption, we have a shorter proof of  $\psi$ from the same assumptions. Note that the variable $x$ cannot appear free in the $A_i$'s and hence by Proposition \ref{freevar}  
\[\mA\models _{X(A/x)} A_i,\]
for $1\le i\le k$. By the induction assumption, we  get that $\mA\models _{X(A/x)}\psi$, and finally that $\mA\models _{X} \forall x\psi$ as wanted.

\item[Rule 7] Assume that we have a natural deduction proof of $\forall y\exists x (\dep(\vec{z},x)\wedge \phi)$ from the assumptions
 $\{A_1,\ldots,A_k\}$ with last rule 7. Let $\mA$ and $X$ be such that  $\mA\models _X A_i$, for $1\le i\le k$. By the assumption, we have a shorter proof of $\exists x \forall y \phi$  from the assumptions $\{A_1,\ldots,A_k\}$ and thus by the induction assumption we get 
\[\mA\models _X \exists x \forall y \phi.\]
By Proposition \ref{freevar}, it follows that
\[\mA\models _{X \upharpoonright (\Fr(\phi)- \{x,y\})} \exists x \forall y \phi.\]
Hence there is $F\colon X \upharpoonright (\Fr(\phi)- \{x,y\})\rightarrow A$ such that 
 \[\mA\models _{X \upharpoonright (\Fr(\phi)- \{x,y\})(FA/xy)}\phi. \]
By the definition $F$, we have 
 \[\mA\models _{X \upharpoonright  (\Fr(\phi)- \{x,y\})(FA/xy)} \dep(\vec{z},x)\wedge \phi, \]
 where $\vec{z}$ lists the variables in $\Fr(\phi)-\{x,y\}$.  By redefining $F$ as a function with domain 
\[X \upharpoonright (\Fr(\phi)-\{x,y\})(A/y),\]
 it follows that 
 \[\mA\models _{(X \upharpoonright (\Fr(\phi)-\{x,y\}))(A/y)} \exists x (\dep(\vec{z},x)\wedge \phi), \]
and finally that 
\[\mA\models _{X \upharpoonright(\Fr(\phi)- \{x,y\})} \forall y\exists x (\dep(\vec{z},x)\wedge \phi). \]
By Proposition \ref{freevar}, we may conclude that
\[\mA\models _{X} \forall y\exists x (\dep(\vec{z},x)\wedge \phi). \]
In fact, it is straightforward to show that this rule is based on the corresponding  logical equivalence:
\[  \exists x \forall y\psi\equiv  \forall y\exists x (\dep(\vec{z},x)\wedge \phi).  \]

\item[Rule 8]   Assume that we have a natural deduction proof of  $\psi$ of the form
\begin{equation*}
\begin{array}{l}
 \forall\vec{x_0}\exists\vec{y_0}(B(\vec{x_0},\vec{y_0})\wedge\\
\forall\vec{x_1}\exists\vec{y_1}(B(\vec{x_1},\vec{y_1})\wedge
\bigwedge_{=\!(\vec{w}_0^p,y_{0,p})\in S}(\vec{w}_0 ^p=\vec{w}_1 ^p\to y_{0,p}=y_{1,p})))
\end{array} 
\end{equation*}
 from the assumptions $ \{A_1,\ldots,A_k\}$ with last rule 8. 
Let $\mA$ and $X$ be such that  $\mA\models _X A_i$, for $1\le i\le k$. By the assumption, we have a shorter proof of $\phi$
\begin{equation}\label{C}
\phi:= \forall\vec{x_0}\exists\vec{y_0} (\bigwedge_{1\le j\le k}\dep(\vec{w}^{i_j},y_{0,{i_j}})\wedge B(\vec{x_0},\vec{y_0})),
\end{equation}
from the same assumptions. By the induction assumption, we get that  
$\mA\models_X\phi$, hence there are functions $F_{0,r}$, $1\le r\le n$, such that
\begin{equation}\label{Bwise0}
\mA\models_{X(A\cdots AF_{0,1}\cdots F_{0,n}/\vec{x_0},\vec{y_0})} \bigwedge_{1\le j\le k}\dep(\vec{w}^{i_j},y_{0,{i_j}})\wedge B(\vec{x_0},\vec{y_0}). 
\end{equation}
We can now interpret  the variable $y_{1,r}$  essentially  by the same function $F_{0,r}$ that was used to interpret  $y_{0,r}$. Suppose that there is $1\le j\le k$ such that $y_{0,r}=y_{0,i_j}$.
For the sake of bookkeeping, we write $\vec{w}^{i_j}$ as  $\vec{w}^{i_j}_0$ and by $\vec{w}^{i_j}_1$ 
we denote the tuple arising from  $\vec{w}^{i_j}$  by replacing $x_{0,s}$ by $x_{1,s}$ and $y_{0,s}$ by $y_{1,s}$, respectively. We can now  define $F_{1,r}$ such that $F_{1,r}(s):=s'(y_{0,r})$, where $s'$ is any assignment satisfying  $s'( \vec{w}^{i_j}_0) =s( \vec{w}^{i_j}_1)$ ($s$ and $s'$ are applied pointwise).
 In the case  there is no $1\le j\le k$ such that $y_{0,r}=y_{0,i_j}$, we use the tuple $x_{0,1},\ldots,x_{0,m}$, instead of  $\vec{w}^{i_j}$, and proceed analogously.

 We first show that 
\begin{equation}\label{Bwise2}
\mA\models_{X(\bar{A}\bar{F_0}\bar{A}\bar{F_1}/\vec{x_0},\vec{y_0}\vec{x_1},\vec{y_1})}  B(\vec{x_1},\vec{y_1})
\end{equation}
holds.  The variables in $\vec{x_0}$ and $\vec{y_0}$ do not appear in  $B(\vec{x_1},\vec{y_1})$, thus \eqref{Bwise2} holds
iff 
\begin{equation}\label{Bwise3}
\mA\models_{X(\bar{A}\bar{F_1}/\vec{x_1},\vec{y_1})}  B(\vec{x_1},\vec{y_1}). 
\end{equation}
Now  \eqref{Bwise3} is equivalent to the truth of the second conjunct in  \eqref{Bwise0},  modulo renaming  (in the team and in the formula)  the variables $x_{0,i}$ and $y_{0,i}$ by $x_{1,i}$ and $y_{1,i}$, respectively. Hence \eqref{Bwise2} follows. 

Let us then show that
\begin{equation}\label{Bwise}
\mA\models_{X(\bar{A}\bar{F_0}\bar{A}\bar{F_1}/\vec{x_0},\vec{y_0}\vec{x_1},\vec{y_1})}  \bigwedge_{=\!(\vec{w}^p,y_{0,p})\in S}(\vec{w}_0 ^p=\vec{w}_1 ^p\to y_{0,p}=y_{1,p}) .
\end{equation}
Let $=\!(\vec{w}^p,y_{0,p})\in S$.
We need to show that the formula  
\[\vec{w}_0 ^p=\vec{w}_1 ^p\to y_{0,p}=y_{1,p},\]
i.e., the formula
\[\neg (\vec{w}_0 ^p=\vec{w}_1 ^p)\vee y_{0,p}=y_{1,p}\]
is satisfied by the team in \eqref{Bwise}. Since this formula is first-order, it suffices to show the claim for every assignment  $s$ in the team. But this is immediate since, assuming $s(\vec{w}_0 ^p)=s(\vec{w}_1 ^p)$, we get by the definition of $F_{1,p}$ that $s(y_{0,p})=s(y_{1,p})$. Now by combining  \eqref{Bwise} and \eqref{Bwise2} with \eqref{Bwise0},  we get that $\mA\models _X \psi$ as wanted.
\end{itemize}
\end{proof}

\section{The Completeness Theorem}

In this section we show that our proof system allows us to derive all first-order consequences of sentences of dependence logic.

\subsection{The roadmap for the proof}\label{roadmap}

Our method for finding  an explicit axiomatization is based on an idea of Jon Barwise \cite{MR0465788}. Instead of dependence logic, Barwise considers the related concept of partially ordered quantifier-prefixes. The roadmap to establishing  that the axioms are sufficiently strong goes as follows:

\begin{enumerate}
\item We will first show that from any sentence $\phi$ it is possible to derive a logically equivalent sentence $\phi'$ that is of the special form 
\begin{equation*}
\forall\vec{x_0}\exists\vec{y_0} (\bigwedge_{1\le j\le k}\dep(\vec{w}^{i_j},y_{0,{i_j}})\wedge \psi(\vec{x_0},\vec{y_0})),
\end{equation*}
where $\psi$ is quantifier-free formula without dependence atoms. 

\item The sentence $\phi'$ above can be shown to be equivalent, in countable models, to the game expression $\Phi$.
 
\begin{equation*}
\begin{array}{l}
\forall\vec{x_0}\exists\vec{y_0}(\psi(\vec{x_0},\vec{y_0})\wedge\\
\forall\vec{x_1}\exists\vec{y_1}(\psi(\vec{x_1},\vec{y_1})\wedge
\bigwedge_{=\!(\vec{w}_0^p,y_{0,p})\in S}(\vec{w}_0 ^p=\vec{w}_1 ^p\to y_{0,p}=y_{1,p})\wedge\\
\forall\vec{x_2}\exists\vec{y_2}(\psi(\vec{x_2},\vec{y_2})\wedge
\bigwedge_{=\!(\vec{w}_0^p,y_{0,p})\in S}(\vec{w}_1 ^p=\vec{w}_2 ^p\to y_{1,p}=y_{2,p})\wedge\\
\hspace{2.75cm}\wedge
\bigwedge_{=\!(\vec{w}_0^p,y_{0,p})\in S}(\vec{w}_0 ^p=\vec{w}_2 ^p\to y_{0,p}=y_{2,p})\wedge\\
...\\
...\\
\hspace{2cm}...)))\\
\end{array} 
\end{equation*}


\item The game expression $\Phi$ can be approximated by the first-order formulas $\Phi^n$ (note that the rule 8 applied to $\phi'$ gives exactly  $\Phi^2$ ):

\[
\begin{array}{l}
\forall\vec{x_0}\exists\vec{y_0}(\psi(\vec{x_0},\vec{y_0})\wedge\\
\forall\vec{x_1}\exists\vec{y_1}(\psi(\vec{x_1},\vec{y_1})\wedge
\bigwedge_{=\!(\vec{w}_0^p,y_{0,p})\in S}(\vec{w}_0 ^p=\vec{w}_1 ^p\to y_{0,p}=y_{1,p})\wedge\\
\forall\vec{x_2}\exists\vec{y_2}(\psi(\vec{x_2},\vec{y_2})\wedge
\bigwedge_{=\!(\vec{w}_0^p,y_{0,p})\in S}(\vec{w}_1 ^p=\vec{w}_2 ^p\to y_{1,p}=y_{2,p})\wedge\\
\hspace{2.75cm}\wedge
\bigwedge_{=\!(\vec{w}_0^p,y_{0,p})\in S}(\vec{w}_0 ^p=\vec{w}_2 ^p\to y_{0,p}=y_{2,p})\wedge\\
...\\
...\\
\forall\vec{x}_{n-1}\exists\vec{y}_{n-1}(\psi(\vec{x}_{n-1},\vec{y}_{n-1})\wedge  \bigwedge_{0\le i< n-1} \bigwedge_{=\!(\vec{w}_0^p,y_{0,p})\in S}(\vec{w}_i ^p=\vec{w}_{n-1} ^p\\
\hspace{7cm}\to y_{i,p}=y_{n-1,p})    )\cdots)\\
\end{array} 
\]


\item Then we show that from the sentence $\phi'$ of Step 1 it is possible to derive the above approximations $\Phi^n$.

\item We then note that if $\mA$
  is a countable recursively saturated (or finite) model, then \[\mA\models\Phi\leftrightarrow\bigwedge_n\Phi^n.\]

\item Finally, we show that  for any $T\subseteq\D$ and $\phi\in \FO$:
 $$T\models\phi\iff T\vdashd\phi$$ as follows: For the non-trivial direction, suppose  $T\not\vdashd\phi$. Let $T^*$ consist of all the approximations of the dependence sentences in $T$.  Now $T^*\cup\{\neg\phi\}$ is deductively consistent in first order logic, and has therefore a  countable recursively saturated model $\mA$. But then $\mA\models T\cup\{\neg\phi\}$, so $T\not\models\phi$.
\end{enumerate}

\subsection{From $\phi$ to $\phi'$ in normal form}

In this section we show that  from any sentence $\phi$ it is possible to derive a logically equivalent sentence $\phi'$  of the special form 
\begin{equation}\label{1}
\forall x_1\ldots \forall x_m \exists y_{1}\ldots \exists y_{n}  (\bigwedge_{1\le j\le k}\dep(\vec{w}^{i_j},y_{i_j})\wedge \theta(\vec{x},\vec{y}))
\end{equation}
where  $\theta$ is quantifier-free formula without dependence atoms. 

\begin{proposition}Let $\phi$ be a sentence of dependence logic. Then $\phi\vdashd \phi'$, where $\phi'$ is of the form \eqref{1}, and $\phi'$ is logically equivalent to $\phi$. 
\end{proposition}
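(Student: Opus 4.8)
The plan is to proceed by induction on the structure of $\phi$, proving the stronger statement for arbitrary formulas (not just sentences): every $\phi\in\df$ is provably equivalent, via $\vdashd$, to a formula $\phi'$ of the shape \eqref{1}. Since every rule I invoke here (unnesting, dependence distribution, dependence introduction, the scope-extension rules 2--4, and the purely first-order manipulations) is based on a logical equivalence, the relation $\phi\equiv\phi'$ comes for free from the soundness already established, so the only real work is producing the derivation $\phi\vdashd\phi'$. Throughout I maintain the invariant that the matrix is a quantifier-free first-order formula $\theta$ and that the dependence atoms, collected immediately after the prefix, all have an existentially quantified variable as their last (dependent) argument.

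The base cases and the non-crux inductive steps are routine. If $\phi$ is first-order I first bring it to prenex form by the ordinary first-order rules (the rules of Figure~\ref{ded} restrict to full first-order natural deduction once all formulas involved are first-order, i.e. once Conditions 1 and 2 hold), and then repair the quantifier prefix exactly as in the existential step below. If $\phi=\dep(t_1,\dots,t_n)$ I apply rule 5 once for each argument to reach $\exists z_1\cdots\exists z_n(\dep(z_1,\dots,z_n)\wedge\bigwedge_i z_i=t_i)$, which is already of the form \eqref{1}. For $\phi=\forall x\psi$ I simply prepend $\forall x$ to the universal block of the normal form $\psi'$ supplied by the induction hypothesis. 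For $\phi=\psi_1\wedge\psi_2$ I rename the bound variables of $\psi_1'$ and $\psi_2'$ apart, pull both universal blocks and both existential blocks to the front using the conjunction analogues of the scope-extension rules, and then merge the two dependence conjunctions $D_1,D_2$ and the two matrices $\theta_1,\theta_2$ into $D_1\wedge D_2\wedge(\theta_1\wedge\theta_2)$.

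The disjunction and existential steps are where the new rules do their work. For $\phi=\psi_1\vee\psi_2$, writing $\psi_i'=\forall\vec{x}_i\exists\vec{y}_i(D_i\wedge\theta_i)$ with $D_i$ the dependence-atom conjunction, I rename apart and use rules 3, 4 and the commutativity of disjunction (rule 2) to move both universal blocks outside the disjunction, reaching $\forall\vec{x}_1\forall\vec{x}_2\,(\exists\vec{y}_1(D_1\wedge\theta_1)\vee\exists\vec{y}_2(D_2\wedge\theta_2))$; the inner disjunction is then exactly the premise of the dependence-distribution rule 6, whose application yields $\exists\vec{y}_1\exists\vec{y}_2(D_1\wedge D_2\wedge(\theta_1\vee\theta_2))$ and hence the normal form. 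For $\phi=\exists x\psi$ I start from $\exists x\,\forall\vec{x}\exists\vec{y}(D\wedge\theta)$ and must slide $\exists x$ past the entire universal block. I do this one universal at a time: each step applies rule 7 to turn $\exists x\,\forall x_i(\cdots)$ into $\forall x_i\,\exists x(\dep(\vec{z},x)\wedge\cdots)$, after which a (conjunction) scope-extension moves the freshly created atom $\dep(\vec{z},x)$ into the matrix and keeps the prefix clean; after $m$ such steps $x$ has joined the existential block, carrying some new atoms that all have $x$ as dependent variable.

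The main obstacle is the bookkeeping around the existential step and the precise shape demanded by rule 6. Rule 7 only moves an existential past a universal, so an existential with nothing universal to its right --- the innermost existential, or the variables $z_1,\dots,z_{n-1}$ created by unnesting --- does not automatically acquire a dependence atom, whereas rule 6 requires that in each disjunct \emph{every} existential already carry one. The remedy, which must be verified carefully, is that one may always attach to such a variable $y$ an atom $\dep(\vec{z},y)$ recording dependence on all remaining variables in scope: introduce a fresh vacuous $\forall w$ immediately after $\exists y$, apply rule 7 to generate $\dep(\vec{z},y)$ with $\vec{z}$ listing those variables, and then discard $\forall w$, which is no longer free. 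Checking the free-variable side conditions of rule 7 and of the scope-extension rules at each step, and confirming that these maneuvers preserve logical equivalence, is the delicate part; the rest is an exercise in pushing quantifiers.
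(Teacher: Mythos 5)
Your overall strategy is genuinely different from the paper's. The paper runs a three-stage pipeline: first prenex the whole sentence (leaving the dependence atoms inside the quantifier-free matrix), then normalize that matrix alone using unnesting and dependence distribution (Step 2), and only afterwards repair the quantifier prefix with rule 7 (Step 4). You instead run a single structural induction that maintains the full normal form \eqref{1} at every node. The benefit of the paper's ordering is that rule 6 is only ever applied to formulas whose existential quantifiers were all just created by unnesting, so each such variable carries exactly one dependence atom and the premise of rule 6 matches on the nose. Your ordering forfeits that, and you correctly spot the first consequence: at a disjunction node some existentials of the already-normalized disjuncts carry no atom at all, and your vacuous-$\forall w$ gadget (introduce $\forall w$ under $\exists y$, apply rule 7 to manufacture $\dep(\vec{z},y)$, then discard $\forall w$ by $\forall$~E) is a legitimate, checkable fix for that.

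There is, however, a second mismatch of the same kind that you do not address, and as written your derivation fails there. After the existential step, a variable $x$ that has been pushed past $m$ universals by rule 7 carries $m$ \emph{distinct} atoms $\dep(\vec{z}_1,x),\ldots,\dep(\vec{z}_m,x)$ with incomparable argument lists (you say so yourself: ``carrying some new atoms''), and the same happens in your first-order base case when you repair the prefix. When such a formula later becomes a disjunct, the premise of rule 6 --- which demands the exact shape $\exists y_1\ldots\exists y_n(\bigwedge_{1\le j\le n}\dep(\vec{z}^j,y_j)\wedge C)$ with $C$ free of dependence atoms --- does not match: there is no slot for a second atom on the same variable, and it cannot be absorbed into $C$. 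This is repairable in the spirit of your own gadget (e.g., unnest each surplus atom $\dep(\vec{z}_i,x)$ onto a fresh variable $u$ with the conjunct $u=x$, so that every existential again carries exactly one atom), but some such step must be supplied and its side conditions checked; alternatively one can reorganize as the paper does, so that rule 6 is never applied downstream of rule 7. The final normal form \eqref{1} itself tolerates repeated dependent variables, so the issue is confined to the rule-6 applications, but it is exactly there that your route needs the extra work.
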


\begin{proof}
We will establish the claim in several steps. Without loss of generality, we assume that in $\phi$ each variable is quantified only once and that, in  the dependence atoms of $\phi$, only variables (i.e. no complex terms) occur. 

\begin{itemize}
\item{Step 1.} We derive from $\phi$ an equivalent sentence in prenex normal form:
\begin{equation}\label{2}
  Q^1 x_1\ldots Q^mx_m\theta,
\end{equation}
where $Q^i\in \{ \exists,\forall \}$ and $\theta$ is a quantifier-free formula. 

We will  prove the claim for every formula $\phi$  satisfying the assumptions made in the beginning of the proof and the assumption (if $\phi $ has free variables) that no variable appears both free and bound in $\phi$.  It suffices to consider the case $\phi:= \psi\vee \theta$, since the  case of conjunction is analogous and the other cases are trivial.

 By the induction assumption, we have derivations  $ \psi\vdashd \psi^*$ and $\theta\vdashd \theta^* $, where
\begin{eqnarray*}
\psi^* &=& Q^1 x_1\ldots Q^m x_m\psi_0,\\
\theta^* &= & Q^{m+1} x_{m+1}\ldots Q^{m+n}x_{m+n}\theta_0,
\end{eqnarray*}
 and  $\psi\equiv \psi^*$ and $\theta\equiv \theta^* $.  Now $\phi\vdashd \psi^* \vee\theta^*$, using two 
applications of the rule \ref{rule1}. Next we prove using induction on 
$m$ that, from $\psi^* \vee\theta^*$, we can derive 
\begin{equation}\label{goalform}
 Q^1 x_1\ldots Q^mx_mQ^{m+1} x_{m+1}\ldots Q^{m+n}x_{m+n}(\psi_0\vee \theta_0).
\end{equation}
Let $m=0$. We prove this case again by induction; for $n=0$ the claim holds. Suppose that $n=l+1$. We assume that $Q^1=\exists$. The  case $Q^1=\forall$ is analogous. The following deduction now shows the claim:
\begin{enumerate}
\item  $\psi_0 \vee Q^{1} x_{1}\ldots Q^{n}x_{n}\theta_0$ 
\item  $Q^{1} x_{1}\ldots Q^{n}x_{n}\theta_0\vee \psi_0 $ (rule \ref{rule2})
\item $Q^{1} x_{1}(Q^{2} x_{2}\ldots Q^{n}x_{n}\theta_0\vee \psi_0) $ (rule \ref{rule4})
\item $Q^{1}x_{1}\ldots Q^{n}x_{n}( \psi_0\vee \theta_0)$ ($\exists$ E and D1),  
\end{enumerate}
where D1 is the derivation 
\begin{enumerate}
\item $Q^{2} x_{2}\ldots Q^{n}x_{n}\theta_0\vee \psi_0$
\item      \hspace{1cm}     .
\item        \hspace{1cm}    .
\item       \hspace{1cm}     .
\item $ Q^{2} x_{2}\ldots Q^{n}x_{n}(\theta_0\vee \psi_0)$ (induction assumption)
\item      \hspace{1cm}     .
\item        \hspace{1cm}    .
\item       \hspace{1cm}     .
\item $ Q^{2} x_{2}\ldots Q^{n}x_{n}(\psi_0\vee \theta_0)$ (D2)
\item $Q^{1}x_{1}\ldots Q^{n}x_{n}(\psi_0\vee \theta_0)$ ($\exists$ I)
\end{enumerate}
where D2 is a derivation that swaps the disjuncts. This concludes the proof for the case $m=0$.

Assume then that $m=k+1$ and that the claim holds for $k$. Now the following derivation shows the claim: (assume $Q^1=\exists$)

\begin{enumerate}
\item  $Q^1x_1Q^2 x_2\ldots Q^mx_m\psi_0 \vee  Q^{m+1}x_{m+1}\ldots Q^{m+n}x_{m+n}\theta_0$
\item  $Q^1x_1(   Q^2 x_2\ldots Q^mx_m\psi_0 \vee  Q^{m+1}x_{m+1}\ldots Q^{m+n}x_{m+n}\theta_0)$ (rule \ref{rule4})
\item $Q^1 x_1\ldots Q^mx_mQ^{m+1} x_{m+1}\ldots Q^{m+n}x_{m+n}(\psi_0\vee\theta_0)$ ($\exists$ E and D3)
\end{enumerate}

where D3 is 
\begin{enumerate}
\item  $Q^2 x_2\ldots Q^mx_m\psi_0 \vee  Q^{m+1}x_{m+1}\ldots Q^{m+n}x_{m+n}\theta_0$
\item      \hspace{1cm}      .
\item       \hspace{1cm}     .
\item         \hspace{1cm}   .
\item $ Q^2 x_2\ldots Q^mx_mQ^{m+1} x_{m+1}\ldots Q^{m+n}x_{m+n}(\psi_0\vee\theta_0)$ (ind. assumption)
\item  $Q^1 x_1\ldots Q^mx_mQ^{m+1} x_{m+1}\ldots Q^{m+n}x_{m+n}(\psi_0\vee\theta_0)$ ($\exists$ I)
\end{enumerate}
This concludes the proof.

\item{Step 2.} Next we show that from a quantifier-free formula $\theta$ it is possible to derive an equivalent formula of the form: 
\begin{equation}\label{3}
\exists z_1\ldots \exists z_n(\bigwedge_{1\le j\le n}\dep(\vec{x}^j,z_j)\wedge \theta^*),   
\end{equation}
where $\theta^*$ is a quantifier-free formula without dependence atoms. Again we prove the claim using induction on $\theta$. If $\theta$ is  first-order atomic or negated atomic, then the claim holds. If  
$\theta$ is of the form  $\dep(\vec{y},x)$, then rule \ref{rule5} allows us to derive 
\[\exists z( \dep(\vec{y},z)\wedge z=x)\]
as wanted.

Assume then that $\theta := \phi \vee \psi$. By the induction assumption,  we have derivations  $ \phi\vdashd \phi^*$ and $\psi\vdashd \psi^* $, where 
\begin{eqnarray*}
\phi^* &=& \exists y_1\ldots \exists y_n(\bigwedge_{1\le j\le n}\dep(\vec{z}^j,y_j)\wedge \phi_0),\\
\psi^* &=& \exists y_{n+1}\ldots \exists y_{n+m}(\bigwedge_{n+1\le j\le n+m}\dep(\vec{z}^j,y_j)\wedge \psi_0)
\end{eqnarray*}
 such that   $\phi\equiv \phi^*$,   $\psi\equiv \psi^*$,  and  $\phi_0$ and $\psi_0$ are quantifier-free formulas without dependence atoms, and $y_i$, for $1\le i \le n$, does not appear in $\psi^*$ and $y_i$, for $n+1\le i \le n+m$, does not appear in $\phi^*$. Now $\theta\vdashd \psi^* \vee\theta^*$, using two 
applications of the rule \ref{rule1} and rule \ref{rule6} allows us to derive 
\[ \exists y_1\ldots \exists y_n \exists y_{n+1} \ldots \exists y_{n+m}(\bigwedge_{1\le j\le n+m}\dep(\vec{z}^j,y_j)\wedge (\phi_0 \vee \psi_0)) \]
which is now equivalent to $\theta$ and has the required form. 
Note that in the case  $\theta := \phi \wedge \psi$  only first-order inference rules for conjunction and $\exists$ are needed and it is similar to the proof of Step 1.

\item{Step 3.} The deductions  in Step 1 and 2 can be combined  (from $\phi$ to \eqref{2}, and then from $\theta$ to  \eqref{3}) to show that  
\begin{equation}\label{4}
\phi\vdashd Q^1 x_1\ldots Q^mx_m \exists z_1\ldots \exists z_n(\bigwedge_{1\le j\le n}\dep(\vec{x}^j,z_j)\wedge \theta^*).   
\end{equation}

\item{Step 4.} We transform the $Q$-quantifier prefix in \eqref{4} to $\forall^*\exists ^*$-form by using rule \ref{rule7} and pushing the new dependence atoms as new conjuncts to 
\begin{equation}\label{5}
\bigwedge_{1\le j\le n}\dep(\vec{x}^j,z_j).
\end{equation}
Note that each swap of the quantifier $\exists x_j$ with a universal quantifier gives rise to a new dependence atom   $\dep(\vec{x}_i,x_j)$ which we can then push to the quantifier-free part of the formula.

We prove the claim using induction on the length $m$ of the $Q$-quantifier block in \eqref{4}. For $m=1$ the claim holds. Suppose that the claim holds for $k$ and $m=k+1$. Assume $Q_1=\forall$. Then the following derivation can be used:

\begin{enumerate}
\item$ \forall x_1 Q^2x_2\ldots Q^mx_m \exists z_1\ldots \exists z_n(\bigwedge_{1\le j\le n}\dep(\vec{x}^j,z_j)\wedge \theta^*)$ 
\item $ Q^2 x_2\ldots Q^mx_m \exists z_1\ldots \exists z_n(\bigwedge_{1\le j\le n}\dep(\vec{x}^j,z_j)\wedge \theta^*)$ ($\forall$ E)
\item       \hspace{1cm}       .
\item       \hspace{1cm}       .
\item       \hspace{1cm}       .
\item $\forall x_{i_1}\cdots \forall  x_{i_h}\exists \vec{x}'  \exists \vec{z} (\bigwedge_{1\le j\le n'}\dep(\vec{x}^j,w_j)\wedge \theta^*)$ (ind. assumption)
\item  $\forall x_1\forall x_{i_1}\cdots \forall  x_{i_h}\exists \vec{x}'  \exists \vec{z} (\bigwedge_{1\le j\le n'}\dep(\vec{x}^j,w_j)\wedge \theta^*)$ ($\forall$ I)
\end{enumerate}
This concludes the proof in the case  $Q_1=\forall$.
Suppose then that $Q_1= \exists$ and that $Q_i=\forall$ at least for some $i\ge 2$.
Now the following derivation can be used:

\begin{enumerate}
\item$ \exists x_1 Q^2x_2 \ldots Q^mx_m \exists z_1\ldots \exists z_n(\bigwedge_{1\le j\le n}\dep(\vec{x}^j,z_j)\wedge \theta^*)$ 
\item  $\exists x_1\forall x_{i_1}\cdots \forall  x_{i_h}\exists \vec{x}'  \exists \vec{z} (\bigwedge_{1\le j\le n'}\dep(\vec{x}^j,w_j)\wedge \theta^*)$ ($\exists$ E and D4)

\item $\forall x_{i_1} \exists x_1(\dep(x_1)\wedge\forall x_{i_2}\cdots \forall  x_{i_h}\exists \vec{x}'  \exists \vec{z} (\bigwedge_{1\le j\le n'}\dep(\vec{x}^j,w_j)\wedge \theta^*)$  (rule \ref{rule7})
\item   \hspace{1cm}           .
\item   \hspace{1cm}          .
\item  \hspace{1cm}           .
\item\label{line7}   $\forall x_{i_1} \exists x_1\forall x_{i_2}\cdots \forall  x_{i_h}\exists \vec{x}'  \exists \vec{z} (\bigwedge_{1\le j\le n'+1}\dep(\vec{x}^j,w_j)\wedge \theta^*)$ (D5)
\item  $\exists x_1\forall x_{i_2}\cdots \forall  x_{i_h}\exists \vec{x}'  \exists \vec{z} (\bigwedge_{1\le j\le n'+1}\dep(\vec{x}^j,w_j)\wedge \theta^*)$  ($\forall$ E)
\item    \hspace{1cm}         .
\item   \hspace{1cm}          .
\item   \hspace{1cm}           .
\item $\forall x_{i_2}\cdots \forall  x_{i_h}\exists x_1\exists \vec{x}'  \exists \vec{z} (\bigwedge_{1\le j\le n''}\dep(\vec{x}^j,w_j)\wedge \theta^*)$ (induction as.)
\item $\forall x_{i_1}\forall x_{i_2}\cdots \forall  x_{i_h}\exists x_1\exists \vec{x}'  \exists \vec{z} (\bigwedge_{1\le j\le n''}\dep(\vec{x}^j,w_j)\wedge \theta^*)$ ($\forall$ I)
\end{enumerate}
where, on line \ref{line7}, $\dep(\vec{x}^j,w_j)$ is $\dep(x_1)$ for  $j=n'+1$. Furthermore, above  D4 refers to  the following deduction
\begin{enumerate}
\item $ Q^2 x_2\ldots Q^mx_m \exists z_1\ldots \exists z_n(\bigwedge_{1\le j\le n}\dep(\vec{x}^j,z_j)\wedge \theta^*)$ 
\item  \hspace{1cm}           .
\item  \hspace{1cm}           .
\item  \hspace{1cm}           .
\item $\forall x_{i_1}\cdots \forall  x_{i_h}\exists \vec{x}'  \exists \vec{z} (\bigwedge_{1\le j\le n'}\dep(\vec{x}^j,w_j)\wedge \theta^*)$ (ind. assumption)
\item  $\exists x_1\forall x_{i_1}\cdots \forall  x_{i_h}\exists \vec{x}'  \exists \vec{z} (\bigwedge_{1\le j\le n'}\dep(\vec{x}^j,w_j)\wedge \theta^*)$ ($\exists$ I)
\end{enumerate}
 and D5 is a straightforward deduction that is easy to construct. This concludes the proof of the case $Q_1=\exists$ and also of Step 4.
\end{itemize}
Steps 1-4 show that from a sentence $\phi$ a sentence of the form 
\begin{equation}\label{6}
\forall x_1\ldots \forall x_m \exists y_{1}\ldots \exists y_{n}  (\bigwedge_{1\le j\le k}\dep(\vec{w}^{i_j},y_{i_j})\wedge \theta(\vec{x},\vec{y}))
\end{equation}
can be deduced.  Furthermore, $\phi$ and the sentence in \eqref{6} are logically equivalent since
 logical equivalence is  preserved in each of the Steps 1-4.
\end{proof}

\subsection{Derivation of the approximations $\Phi^n$}

In the previous section we showed that from any sentence of dependence logic a logically equivalent sentence of the form 
\begin{equation}\label{normalform}
\forall\vec{x_0}\exists\vec{y_0} (\bigwedge_{1\le j\le k}\dep(\vec{w}^{i_j},y_{0,{i_j}})\wedge \psi(\vec{x_0},\vec{y_0})),
\end{equation}
can be derived,  where $\psi$ is quantifier-free formula without dependence atoms. We will next show that the approximations $\Phi^n$ of the  game expression  $\Phi$ (discussed in  Section \ref{roadmap}) correponding to sentence \eqref{normalform}, can be deduced from it. 

The  formulas $\Phi$ and $\Phi^n$ are  defined as follows. 
\begin{definition}\label{approximation}
Let $\phi$ be the formula \eqref{normalform}, where
$\vec{x_0}=(x_{0,1},\ldots,x_{0,m})$ and $\vec{y_0}=(y_{0,1},\ldots,y_{0,n})$, and  
 the variables in $\vec{w}^{i_j}$ are contained in the set
\[  \{x_{0,1},\ldots,x_{0,m},y_{0,1},\ldots,y_{0,i_j-1} \}  .\]
Furthermore, let $S$ be the set   containing the conjuncts of 
\[\bigwedge_{1\le j\le k}\dep(\vec{w}^{i_j},y_{0,{i_j}}), \]
and  the dependence atom $\dep(x_{0,1},\ldots,x_{0,m},y_{0,p})$ for each of the variables $y_{0,p}$ ($1\le p \le n$) such that $y_{0,p} \notin \{y_{0,i_1},\ldots, y_{0,i_k}\}$. Define $\vec{x_l}=(x_{l,1},\ldots,x_{l,m})$, $\vec{y_l}=(y_{l,1},\ldots,y_{l,n})$ and  $\vec{w}_l^p$ analogously. 

\begin{itemize}
\item The infinitary formula   $\Phi$ is now defined as:
 \begin{equation}\label{gameformula}
\begin{array}{l}
\forall\vec{x_0}\exists\vec{y_0}(\psi(\vec{x_0},\vec{y_0})\wedge\\
\forall\vec{x_1}\exists\vec{y_1}(\psi(\vec{x_1},\vec{y_1})\wedge
\bigwedge_{=\!(\vec{w}_0^p,y_{0,p})\in S}(\vec{w}_0 ^p=\vec{w}_1 ^p\to y_{0,p}=y_{1,p})\wedge\\
\forall\vec{x_2}\exists\vec{y_2}(\psi(\vec{x_2},\vec{y_2})\wedge
\bigwedge_{=\!(\vec{w}_0^p,y_{0,p})\in S}(\vec{w}_1 ^p=\vec{w}_2 ^p\to y_{1,p}=y_{2,p})\wedge\\
\hspace{2.75cm}\wedge
\bigwedge_{=\!(\vec{w}_0^p,y_{0,p})\in S}(\vec{w}_0 ^p=\vec{w}_2 ^p\to y_{0,p}=y_{2,p})\wedge\\
...\\
...\\
\hspace{2cm}...)))\\
\end{array} 
\end{equation}

\item The $n$th approximation $\Phi^n$ of $\phi$  is defined as: 

\[
\begin{array}{l}
\forall\vec{x_0}\exists\vec{y_0}(\psi(\vec{x_0},\vec{y_0})\wedge\\
\forall\vec{x_1}\exists\vec{y_1}(\psi(\vec{x_1},\vec{y_1})\wedge
\bigwedge_{=\!(\vec{w}_0^p,y_{0,p})\in S}(\vec{w}_0 ^p=\vec{w}_1 ^p\to y_{0,p}=y_{1,p})\wedge\\
\forall\vec{x_2}\exists\vec{y_2}(\psi(\vec{x_2},\vec{y_2})\wedge
\bigwedge_{=\!(\vec{w}_0^p,y_{0,p})\in S}(\vec{w}_1 ^p=\vec{w}_2 ^p\to y_{1,p}=y_{2,p})\wedge\\
\hspace{2.75cm}\wedge
\bigwedge_{=\!(\vec{w}_0^p,y_{0,p})\in S}(\vec{w}_0 ^p=\vec{w}_2 ^p\to y_{0,p}=y_{2,p})\wedge\\
...\\
...\\
\forall\vec{x}_{n-1}\exists\vec{y}_{n-1}(\psi(\vec{x}_{n-1},\vec{y}_{n-1})\wedge  \bigwedge_{0\le i< n-1} \bigwedge_{=\!(\vec{w}_0^p,y_{0,p})\in S}(\vec{w}_i ^p=\vec{w}_{n-1} ^p\\
\hspace{7cm}\to y_{i,p}=y_{n-1,p})    )\cdots)\\
\end{array} 
\]
\end{itemize}
\end{definition}
We will next show that the approximations  $\Phi^n$ can be deduced from $\phi$.

\begin{proposition}\label{Prop16} Let $\phi$ and   $\Phi^n$ be as in Definition \ref{approximation}. Then 
\[ \phi \vdashd  \Phi^n \]
for all $n\ge 1$.
\end{proposition}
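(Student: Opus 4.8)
The plan is to prove by induction on $n$ the slightly stronger statement $\phi\vdashd\hat\Phi^n$, where $\hat\Phi^n$ denotes the result of inserting the dependence atoms $\bigwedge_{1\le j\le k}\dep(\vec{w}_{n-1}^{i_j},y_{n-1,i_j})$ as an additional conjunct inside the innermost block $\forall\vec{x}_{n-1}\exists\vec{y}_{n-1}(\cdots)$ of $\Phi^n$. Since these atoms can afterwards be deleted by $\wedge$ E, this yields $\phi\vdashd\hat\Phi^n\vdashd\Phi^n$, which is what we want. The base case is immediate, for when $n=1$ there are no consistency conjuncts and $\hat\Phi^1$ is literally $\phi$. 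Throughout I write $C(i,j)$ for the consistency conjunct $\bigwedge_{\dep(\vec{w}_0^p,y_{0,p})\in S}(\vec{w}_i^p=\vec{w}_j^p\to y_{i,p}=y_{j,p})$, so that the innermost matrix of $\hat\Phi^n$ reads $\bigwedge_j\dep(\vec{w}_{n-1}^{i_j},y_{n-1,i_j})\wedge\psi(\vec{x}_{n-1},\vec{y}_{n-1})\wedge\bigwedge_{0\le i<n-1}C(i,n-1)$.

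For the inductive step $\hat\Phi^n\vdashd\hat\Phi^{n+1}$ I would apply Rule 8 inside this innermost block. First, using $\wedge$ E and $\wedge$ I, I duplicate the leading dependence atoms, rewriting the innermost matrix as $\bigwedge_j\dep(\vec{w}_{n-1}^{i_j},y_{n-1,i_j})\wedge B$, where the matrix $B:=\bigwedge_j\dep(\vec{w}_{n-1}^{i_j},y_{n-1,i_j})\wedge\psi(\vec{x}_{n-1},\vec{y}_{n-1})\wedge\bigwedge_{0\le i<n-1}C(i,n-1)$ retains a spare copy of the atoms. Rule 8 now applies, with matrix $B$ and with the first copy of the atoms consumed; since Rule 8 places no constraint on the internal shape of $B$, it is legitimate for $B$ to carry both the spare atoms and the history variables as parameters. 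The conclusion splits the block into two: the old level $n-1$ now carrying $B(\vec{x}_{n-1},\vec{y}_{n-1})$, and a fresh block $\forall\vec{x}_n\exists\vec{y}_n$ carrying $B(\vec{x}_n,\vec{y}_n)\wedge C(n-1,n)$.

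The crux is how Rule 8 relocates the accumulated history. In $B$ the conjuncts $\bigwedge_{i<n-1}C(i,n-1)$ involve the history variables $\vec{w}_i^p,y_{i,p}$ (for $i<n-1$) only as parameters, so Rule 8's replacement of the level-$(n-1)$ variables by the level-$n$ variables carries each $C(i,n-1)$ to $C(i,n)$ and, likewise, the spare atoms to $\dep(\vec{w}_n^{i_j},y_{n,i_j})$. Together with the conjunct $C(n-1,n)$ that Rule 8 supplies explicitly, the new block therefore acquires exactly $\bigwedge_j\dep(\vec{w}_n^{i_j},y_{n,i_j})\wedge\psi(\vec{x}_n,\vec{y}_n)\wedge\bigwedge_{0\le i<n}C(i,n)$, which is precisely the innermost block of $\hat\Phi^{n+1}$. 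Deleting by $\wedge$ E the now-superfluous atoms left at level $n-1$ then produces $\hat\Phi^{n+1}$, completing the induction.

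I expect the main obstacle to be twofold. Conceptually, the point is to see that a naive iteration of Rule 8 is impossible: Rule 8 consumes the atoms it acts on and manufactures consistency only between the two adjacent blocks it creates, so it cannot by itself produce the non-adjacent conjuncts $C(i,n)$, $i<n-1$, demanded by $\hat\Phi^{n+1}$. The device of stowing a spare copy of the atoms inside $B$ is exactly what both regenerates the atoms for the next round and, through the variable replacement built into Rule 8, transports the entire history-consistency block into consistency with the newly created level. Technically, all of these steps ($\wedge$ I, $\wedge$ E, Rule 8) must be carried out not on a standalone sentence but deep inside the nested quantifier prefix; as in the normal-form proposition, this is done by peeling off the outer blocks with repeated $\forall$ E and $\exists$ E, transforming the exposed innermost block, and re-assembling with $\exists$ I, $\forall$ I and $\wedge$ I, all the while checking the eigenvariable conditions of $\exists$ E and $\forall$ I against the free variables of the surrounding context. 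This part is routine but notation-heavy.
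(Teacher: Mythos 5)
Your proposal is correct and follows essentially the same route as the paper: the paper also strengthens the induction hypothesis to a formula $\Omega^n$ (your $\hat\Phi^n$) that retains the dependence atoms in the innermost block, duplicates those atoms so that Rule~8 consumes one copy while the spare copy and the accumulated consistency conjuncts ride inside the matrix $B$ and get relabelled to level $n$, and then strips and reassembles the outer quantifier prefix around this step before deleting the leftover atoms. The only cosmetic difference is notation ($\Omega^n$, $D_l$, $C_l^-$, $C_l^+$ versus your $\hat\Phi^n$, $C(i,j)$).
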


\begin{proof}
We will prove a slightly stronger claim: $\phi \vdashd  \Omega^n$, where   $\Omega^n$ is defined otherwise as 
 $\Phi^n$, except that,  on the last line, we also have the conjunct   $$\bigwedge_{1\le j\le k}\dep(\vec{w}_{n-1}^{i_j},y_{n-1,i_j})$$  (see \eqref{normalform}), with the variables $x_{0,l}$ and $y_{0,l}$ replaced by  
$x_{n-1,l}$ and $y_{n-1,l}$, respectively.

Let us first note that  clearly
\[\Omega^n \vdashd \Phi^n,\] 
since this amounts only to showing that the new conjunct can be eliminated from the formula. 
Hence to prove the proposition, it suffices to  show that 
\[ \phi \vdashd   \Omega^n \]
for all $n\ge 1$. We will prove the claim using induction on $n$. The claim holds for $n=1$, 
since $\Omega^1 =  \phi$. Suppose that $n=h+1$. By the induction assumption, $\phi \vdashd   \Omega^h$. Let us  recall that 
$\Omega^h$  is the sentence
\[
\begin{array}{l}
\forall\vec{x_0}\exists\vec{y_0}(\psi(\vec{x_0},\vec{y_0})\wedge\\
\forall\vec{x_1}\exists\vec{y_1}(\psi(\vec{x_1},\vec{y_1})\wedge
\bigwedge_{=\!(\vec{w}_0^p,y_{0,p})\in S}(\vec{w}_0 ^p=\vec{w}_1 ^p\to y_{0,p}=y_{1,p})\wedge\\
\forall\vec{x_2}\exists\vec{y_2}(\psi(\vec{x_2},\vec{y_2})\wedge
\bigwedge_{=\!(\vec{w}_0^p,y_{0,p})\in S}(\vec{w}_1 ^p=\vec{w}_2 ^p\to y_{1,p}=y_{2,p})\wedge\\
\hspace{2.75cm}\wedge
\bigwedge_{=\!(\vec{w}_0^p,y_{0,p})\in S}(\vec{w}_0 ^p=\vec{w}_2 ^p\to y_{0,p}=y_{2,p})\wedge\\
...\\
...\\
\forall\vec{x}_{h-1}\exists\vec{y}_{h-1}(\psi(\vec{x}_{h-1},\vec{y}_{h-1})\wedge \bigwedge_{1\le j\le k}\dep(\vec{w}_{h-1}^{i_j},y_{h-1,i_j})\wedge \\
 \hspace{2cm}\bigwedge_{0\le i< h-1} \bigwedge_{=\!(\vec{w}_0^p,y_{0,p})\in S}(\vec{w}_i ^p=\vec{w}_{h-1} ^p
\to y_{i,p}=y_{h-1,p})    )\cdots)\\
\end{array} 
\]
The claim  $\phi \vdashd   \Omega^{h+1}$ is now proved as follows:  
\begin{enumerate}
\item\label{subf} We first  eliminate the quantifiers and conjuncts of $\Omega^h$ to show that, from $\phi$,   the following  subformula of $\Omega^{h}$  can be deduced:

\[
\begin{array}{l}
\forall\vec{x}_{h-1}\exists\vec{y}_{h-1}(\psi(\vec{x}_{h-1},\vec{y}_{h-1})\wedge \bigwedge_{1\le j\le k}\dep(\vec{w}_{h-1}^{i_j},y_{h-1,i_j})\wedge \\
 \hspace{2cm}\bigwedge_{0\le i< h-1} \bigwedge_{=\!(\vec{w}_0^p,y_{0,p})\in S}(\vec{w}_i ^p=\vec{w}_{h-1} ^p
\to y_{i,p}=y_{h-1,p})    )\\
\end{array} 
\]
\item\label{newsubf} Then  rule 8 essentially allows us to deduce   
\[
\begin{array}{l}
\forall\vec{x}_{h-1}\exists\vec{y}_{h-1}(\psi(\vec{x}_{h-1},\vec{y}_{h-1})\wedge \bigwedge_{0\le i< h-1} \bigwedge_{=\!(\vec{w}_0^p,y_{0,p})\in S}(\vec{w}_i ^p=\vec{w}_{h-1} ^p\\
\hspace{7,5cm} \to y_{i,p}=y_{h-1,p}) \wedge \\
\forall\vec{x}_{h}\exists\vec{y}_{h}(\psi(\vec{x}_{h},\vec{y}_{h})\wedge \bigwedge_{1\le j\le k}\dep(\vec{w}_{h}^{i_j},y_{h,i_j})\wedge  \\
\hspace{3,1cm} \bigwedge_{0\le i< h} \bigwedge_{=\!(\vec{w}_0^p,y_{0,p})\in S}(\vec{w}_i ^p=\vec{w}_{h} ^p
\to y_{i,p}=y_{h,p}) ))
\end{array} 
\]
\item and finally it suffices to "reverse" the derivation in Step 1 to show that  $\phi \vdashd   \Omega^{h+1}$. 
\end{enumerate}

We will now show the deduction of the formula in Step 2 assuming the formula in Step 1. To simply notation, we use the following shorthands:
\begin{itemize}
\item  $P_l:=\psi(\vec{x}_{l},\vec{y}_{l})$, 
\item $D_l:=\bigwedge_{1\le j\le k}\dep(\vec{w}_{l}^{i_j},y_{l,i_j})$,  
\item $C_l:= \bigwedge_{0\le i< l} \bigwedge_{=\!(\vec{w}_0^p,y_{0,p})\in S}(\vec{w}_i ^p=\vec{w}_{l} ^p
\to y_{i,p}=y_{l,p})$
\item $C^-_l:=\bigwedge_{0\le i< l-1} \bigwedge_{=\!(\vec{w}_0^p,y_{0,p})\in S}(\vec{w}_i ^p=\vec{w}_{l} ^p\to y_{i,p}=y_{l,p})$
\item $ C^+_l:= \bigwedge_{=\!(\vec{w}_0^p,y_{0,p})\in S}(\vec{w}_{l-1} ^p=\vec{w}_{l} ^p\to y_{l-1,p}=y_{l,p})$
\end{itemize}
It is important to note that $ C^-_l\wedge  C^+_l=C_l$. The following deduction now shows the claim:
\begin{enumerate}
\item $\forall\vec{x}_{h-1}\exists\vec{y}_{h-1}(  P_{h-1}\wedge D_{h-1}\wedge C_{h-1})$
\item \hspace{1cm}           .
\item     \hspace{1cm}        .
\item      \hspace{1cm}       .
\item $\forall\vec{x}_{h-1}\exists\vec{y}_{h-1}( D_{h-1}\wedge (P_{h-1}\wedge D_{h-1}\wedge C_{h-1}))$  (D1)
\item $\forall\vec{x}_{h-1}\exists\vec{y}_{h-1}((P_{h-1}\wedge D_{h-1}\wedge C_{h-1}) \wedge  \forall\vec{x}_{h}\exists\vec{y}_{h}((P_{h}\wedge D_{h}\wedge C^-_h)\wedge C^+_l)$ (Rule 8)
\item    \hspace{1cm}         .
\item     \hspace{1cm}        .
\item      \hspace{1cm}      .
\item $\forall\vec{x}_{h-1}\exists\vec{y}_{h-1}((P_{h-1}\wedge C_{h-1}) \wedge \forall\vec{x}_{h}\exists\vec{y}_{h}(P_{h}\wedge D_{h}\wedge C_h))$ (D2)
\end{enumerate}
In the above derivation, the derivation D1 just creates one extra copy of  $ D_{h-1}$, and D2 deletes   $ D_{h-1}$
from the formula and groups together $ C^-_h$ and $ C^+_h$. The deductions D1 and D2 can be easily constructed. 

This completes the proof Proposition \ref{Prop16}.
\end{proof}

\subsection{Back from approximations}

In this section we prove the main result of the paper.

The use of game expressions to analyze existential second order sentences is originally due to Lars Svenonius \cite{MR0209138}. Subsequently it was developed by Robert Vaught \cite{MR0409106}.

Basic fact about the game expressions is:

\begin{proposition}\label{17}
Let $\phi$ and $\Phi$ be as in Section 5.1. Then $\phi\models\Phi$. In countable models
 $\Phi\models\phi$.
\end{proposition}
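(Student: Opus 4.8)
The plan is to establish both directions by unwinding the semantic game associated with the dependence sentence $\phi$ of the form~\eqref{normalform} and comparing it with the play structure encoded by the game expression $\Phi$. The key observation is that $\phi$ asserts the existence of Skolem functions $f_1,\ldots,f_n$ for $\vec{y_0}$ that satisfy the functional dependencies $\dep(\vec{w}^{i_j},y_{0,i_j})$ together with the matrix $\psi$, whereas each layer of $\Phi$ records one more "round" in which the falsifier plays a fresh tuple $\vec{x}_\ell$ and the verifier responds with $\vec{y}_\ell$ subject to the consistency conditions $\vec{w}_i^p=\vec{w}_\ell^p\to y_{i,p}=y_{\ell,p}$ coming from $S$. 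These conditions are exactly what forces the chosen values across different rounds to cohere with a single global dependence function.

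First I would prove $\phi\models\Phi$. Suppose $\mA\models_X\phi$, so we have functions realizing the $\vec{y_0}$ with the required dependencies. The idea is to use these \emph{same} functions to play along every branch of $\Phi$ simultaneously: whenever round $\ell$ presents $\vec{x}_\ell$, I would let the verifier pick $\vec{y}_\ell$ according to the fixed Skolem functions applied to $\vec{w}_\ell^p$. Because the functions genuinely witness the dependence atoms, any two rounds $i,\ell$ with $\vec{w}_i^p=\vec{w}_\ell^p$ automatically yield $y_{i,p}=y_{\ell,p}$, so every consistency conjunct in $S$ is satisfied, and $\psi(\vec{x}_\ell,\vec{y}_\ell)$ holds at each round since it held for $\phi$. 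This verifies all (infinitely many) layers at once, giving $\mA\models_X\Phi$. This direction holds in arbitrary models and should be essentially a bookkeeping argument over the nested quantifier blocks.

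For the converse, $\Phi\models\phi$ \emph{in countable models}, the strategy is to extract global dependence functions from a winning strategy in the infinite game $\Phi$. Here I would enumerate the countably many possible values of the argument-tuples $\vec{w}^{i_j}$ (using that the domain is countable) and play the game so that, over the course of $\omega$ rounds, every relevant tuple of argument-values occurs as some $\vec{w}_\ell^p$. The consistency conditions built into $\Phi$ then guarantee that the value assigned to $y_{i_j}$ depends only on $\vec{w}^{i_j}$ and not on the round, so one can define honest functions $f_1,\ldots,f_n$ witnessing the dependence atoms; the conjunct $\psi$ transfers directly. Countability is precisely what lets a single play visit all argument-values and thereby stitch the per-round choices into total, well-defined functions.

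The main obstacle will be the converse direction: one must argue carefully that the verifier can, in a single $\omega$-length play, be confronted with every needed argument-tuple so that the extracted relation is genuinely a total function and not merely a partial one, and that the nested dependence atoms (where $\vec{w}^{i_j}$ may itself involve earlier $y$'s) are handled in the correct order. I expect the cleanest route is to cite or adapt the Svenonius--Vaught analysis of game expressions from~\cite{MR0209138} and~\cite{MR0409106}, which is exactly the tool the authors credit, reducing the problem to the standard fact that these game quantifiers collapse to their existential second-order meaning over countable structures.
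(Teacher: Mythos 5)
Your proposal follows essentially the same route as the paper: the Skolem functions witnessing $\phi$ give player II a uniform strategy in the game for $\Phi$, and in a countable model the functions are extracted from a single play of the game in which all argument tuples are eventually presented. The one ``obstacle'' you flag in the converse is dispatched in the paper exactly along the lines you anticipate: player I enumerates all $m$-tuples $\vec{a}_n$ of the domain (which he fully controls, so there is no partiality issue and no need to reach the $\vec{w}$-tuples directly), the functions $f_i$ are defined on these full $\vec{x}$-tuples, and the consistency conjuncts of the winning condition then force each $f_{i_j}$ to factor through $\vec{w}^{i_j}$, which is precisely the dependence atom.
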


\begin{proof}
Suppose $\phi$ is as in (\ref{normalform}) and $\mA\models\phi$. Suppose furthermore $\Phi$ as in (\ref{gameformula}). We show $\mA\models\Phi$.
By definition, the truth of $\Phi$ in $\mA$ means the existence of a winning strategy of player II in the following game $G(\mA,\Phi)$:

\begin{center}
$$
\begin{array}{c|ccccc}
I & \vec{a_0} &           & \vec{a_1} & \ldots\\
\hline
II &           &\vec{b_0} &          & \vec{b_1} & \ldots\\ 
\end{array}$$
\end{center}
where $\vec{a_i},\vec{b_i}$ are chosen form $A$, and player II wins if the assignment $s(\vec{x_i})=\vec{a_i}$, $s(\vec{y_i})=\vec{b_i}$ satisfies for all $n$: $$\mA\models_s\psi(\vec{x_n},\vec{y_n})\wedge\bigwedge_{=\!(\vec{w}_0^p,y_{0,p})\in S}(\vec{w}_i ^p=\vec{w}_{n} ^p\to y_{i,p}=y_{n,p}).  $$ We can  get a winning strategy for player II as follows. Since $\mA\models\phi$, there are functions $f_i(x_1,...,x_m)$, $1\le i\le n$, such that if $X$ is the set of all assignments $s$ with $s(y_i)=f_i(s(x_1),...,s(x_m))$, then
\begin{equation}\label{fg}\mA\models_X\bigwedge_{1\le j\le k}\dep(\vec{w}^{i_j},y_{0,i_j})\wedge \theta(\vec{x},\vec{y}).
\end{equation}
The strategy of player II in $G(\mA,\Phi)$ is to play $$\vec{b_i}=(f_1(\vec{a_i}),\ldots,f_n(\vec{a_i})).$$ This guarantees that if $s(\vec{x_i})=\vec{a_i}$, $s(\vec{y_i})=\vec{b_i}$, then  clearly $\mA\models_s\psi(\vec{x_n},\vec{y_n})$, but we have to also show that $\mA\models_s\bigwedge_{=\!(\vec{w}_0^p,y_{0,p})\in S}(\vec{w}_i ^p=\vec{w}_{n} ^p\to y_{i,p}=y_{n,p})$. This follows immediately from (\ref{fg}).

Suppose then $\mA$ is a countable model of $\Phi$. We show $\mA\models\phi$. Let $\vec{a_n}=(a_n^1,...,a_n^m)$, $n<\omega$, list all $m$-sequences of elements of $A$. We play the game $G(\mA,\Phi)$ letting player I play the sequence $\vec{a_n}$ as his $n$'th move. Let $\vec{b_n}$ be the response of $II$, according to her winning strategy, to $\vec{a_n}$. Let $f_i$ be the function $\vec{a_n}\mapsto b^i_n$. It is a direct consequence of the winning condition of player II that if $X$ is the set of assignments $s$ with $s(y_i)=f_i(s(x_1),...,s(x_m))$, $1\le i\le n$, then (\ref{fg}) holds.
 \end{proof}

%

\begin{definition} A model $\mA$ is \emph{recursively saturated} if it satisfies
\[\forall\vec{x}((\bigwedge_n\exists y\bigwedge_{m\le n}\phi_m(\vec{x},y))\to
\exists y\bigwedge_n\phi_n(\vec{x},y)),\]
 whenever $\{\phi_n(\vec{x},y):n\in\oN\}$ is recursive.
\end{definition}

There are many recursively saturated models:
\begin{proposition}[\cite{MR0403952}]
For every infinite $\mA$ there is a recursively saturated countable $\mA'$ such that $\mA\equiv \mA'$.
\end{proposition}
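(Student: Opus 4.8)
This is the classical theorem on the existence of countable recursively saturated models, and I would prove it by combining the downward L\"owenheim--Skolem theorem with an elementary chain construction that successively realizes recursive types. The plan is as follows. First I would invoke the downward L\"owenheim--Skolem theorem to replace the infinite model $\mA$ by a countable $\mA_0$ with $\mA_0\equiv\mA$, working in the ambient recursive vocabulary so that there are only countably many recursive sets of formulas $\{\phi_m(\vec{x},y):m\in\N\}$. I would then build an elementary chain $\mA_0\prec\mA_1\prec\mA_2\prec\cdots$ of countable models in which every finitely satisfiable recursive type eventually gets realized, and set $\mA'=\bigcup_n\mA_n$.

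The heart of the construction is the successor step. At stage $n$ there are only countably many pairs $(\vec{a},\Sigma)$ with $\vec{a}$ a finite tuple from $\mA_n$ and $\Sigma=\{\phi_m(\vec{x},y):m\in\N\}$ a recursive set of formulas. For each such pair for which $\Sigma(\vec{a},y)$ is finitely satisfiable in $\mA_n$ I would introduce a fresh constant $c_{\vec{a},\Sigma}$ and add the sentences $\phi_m(\vec{a},c_{\vec{a},\Sigma})$ for all $m$. Together with the elementary diagram of $\mA_n$ this theory is finitely satisfiable: any finite subset mentions finitely many of the new constants and, for each, only finitely many of the $\phi_m$, which are realizable in $\mA_n$ by the finite-satisfiability assumption. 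By compactness and a further application of downward L\"owenheim--Skolem I obtain a countable model $\mA_{n+1}$ of this theory; reading off the elementary diagram shows $\mA_n\prec\mA_{n+1}$.

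Finally I would verify that $\mA'=\bigcup_n\mA_n$ works. It is countable and, by the elementary chain (Tarski--Vaught) theorem, $\mA'\equiv\mA_0\equiv\mA$. For recursive saturation, suppose $\vec{a}\in\mA'$ and $\{\phi_m(\vec{x},y):m\in\N\}$ is recursive with $\mA'\models\bigwedge_n\exists y\bigwedge_{m\le n}\phi_m(\vec{a},y)$. The tuple $\vec{a}$ lies in some $\mA_n$, and since $\mA_n\prec\mA'$ the type is finitely satisfiable already in $\mA_n$; hence at stage $n+1$ some element realizes $\bigwedge_m\phi_m(\vec{a},y)$, and this witness lies in $\mA'$, so $\mA'\models\exists y\bigwedge_n\phi_n(\vec{a},y)$ as required.

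The main obstacle, and the only real content beyond routine model theory, is the successor-step bookkeeping: one must check that only countably many recursive types need to be handled at each stage, that finite satisfiability transfers between $\mA_n$ and $\mA'$ through the elementary embeddings, and that all chosen types can be realized \emph{simultaneously} in a single countable elementary extension --- which is precisely what the compactness argument above secures.
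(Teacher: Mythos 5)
The paper offers no proof of this proposition at all --- it is quoted as a known result with a citation to Barwise--Schlipf --- so there is nothing internal to compare against. Your argument is the standard elementary-chain construction (take a countable $\mA_0\equiv\mA$ by downward L\"owenheim--Skolem, iteratively realize all finitely satisfiable recursive types over finite tuples via compactness, and pass to the union), and it is correct as written: the key points --- that only countably many (tuple, recursive type) pairs arise at each stage, that the finitely many constraints in any finite fragment of the expanded diagram can be witnessed independently in $\mA_n$, and that finite satisfiability over a tuple of $\mA'$ pulls back to the $\mA_n$ containing it because $\mA_n\prec\mA'$ --- are all present. The only hypothesis worth making explicit is that the vocabulary is countable (indeed recursive), which the paper assumes implicitly and which you note in passing.
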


Over a  recursively saturated model, the game expression can be replaced
by a conjunction of its approximations:

\begin{proposition}\label{recsat}
If $\mA$  is  recursively saturated (or finite), then
\[\mA\models\Phi\leftrightarrow\bigwedge_n\Phi^n.\]
\end{proposition}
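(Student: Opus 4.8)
The plan is to prove the two implications separately: the forward one holds in all models, while the backward one is where the saturation hypothesis does the work. Throughout I rely on Proposition \ref{17}, which identifies $\mA\models\Phi$ with the existence of a winning strategy for player II in the game $G(\mA,\Phi)$.

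First I would dispose of $\mA\models\Phi\Rightarrow\mA\models\bigwedge_n\Phi^n$, which needs no saturation. Let $\sigma$ be a winning strategy for II in $G(\mA,\Phi)$. To see $\mA\models\Phi^n$ for a fixed $n$, let player I play arbitrary $\vec{a_0},\ldots,\vec{a_{n-1}}$ and let II answer by $\sigma$; since $\sigma$ is winning, the depth-$n$ constraints displayed in $\Phi^n$ are all met. As the $\vec{a_i}$ and $n$ were arbitrary, $\mA\models\bigwedge_n\Phi^n$.

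The substance is the converse. Assume $\mA\models\bigwedge_n\Phi^n$; I build a winning strategy for II by a greedy construction maintaining the invariant that, from the current position, every finite approximation still holds. Write a position as the tuple $\vec{c}$ of all moves made so far, with I about to move in round $k$. For each $n$ let $\phi_n(\vec{c},\vec{y})$ be the first-order formula asserting that, if II now responds with $\vec{y}=\vec{y_k}$, the depth-$n$ continuation of $\Phi$ from this position holds; these are exactly the nested $\forall\exists$ subformulas occurring in $\Phi^{k+n}$, so the family $\{\phi_n:n\in\oN\}$ is recursive and monotone ($\phi_{n+1}\to\phi_n$, since surviving $n{+}1$ rounds entails surviving $n$). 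The invariant at an I-to-move position is $\mA\models\forall\vec{x_k}\exists\vec{y_k}\,\phi_n$ for every $n$; at the empty position this is precisely $\bigwedge_n\Phi^n$, so it holds initially. When I plays $\vec{a_k}$, instantiating the universal quantifier preserves, for each $n$, the statement $\mA\models\exists\vec{y_k}\,\phi_n(\vec{c},\vec{y_k})$; by monotonicity this equals $\exists\vec{y_k}\bigwedge_{m\le n}\phi_m$, which is exactly the hypothesis of the recursive saturation schema. Recursive saturation then yields a single $\vec{b_k}$ with $\mA\models\bigwedge_n\phi_n(\vec{c},\vec{b_k})$, re-establishing the invariant at the next I-to-move position. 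Fixing such a choice at every position defines the strategy; along any play consistent with it, the $n=1$ instance of each maintained invariant supplies the round-$k$ conjunct $\psi(\vec{x_k},\vec{y_k})\wedge\bigwedge_{=\!(\vec{w}_0^p,y_{0,p})\in S}(\ldots\to\ldots)$, so every round's winning condition is met and II wins, giving $\mA\models\Phi$. In the finite case, recursive saturation is replaced by the remark that, the domain being finite, if no single $\vec{b_k}$ worked for all $n$ one could take the maximum of the finitely many failure-depths and contradict the invariant; equivalently, apply K\"onig's Lemma to the finitely branching tree of surviving positions.

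The main obstacle I expect is in this backward direction: the verification that ``II can survive $n$ more rounds from position $\vec{c}$'' is uniformly first-order definable by a \emph{recursive} family $\phi_n$ having only $\vec{c},\vec{y}$ as parameters, so that the recursive saturation schema applies verbatim, together with the monotonicity remark that lets me rewrite the hypothesis in the exact shape $\bigwedge_n\exists\vec{y}\bigwedge_{m\le n}\phi_m$ demanded by the definition. A minor technical point is that the schema is stated for a single witness $y$ whereas II plays a tuple $\vec{y_k}$; this is handled by the usual coding of tuples, and I would flag it only in passing.
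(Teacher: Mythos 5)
Your proposal is correct and follows essentially the same route as the paper: the forward direction by restricting player II's winning strategy to $n$ rounds, and the converse by maintaining the invariant that all finite approximations hold at the current position (your family $\phi_n(\vec{c},\vec{y})$ playing the role of the paper's $\Phi^n_m$) and invoking recursive saturation to extract a single response $\vec{b_k}$ witnessing all of them at once. In fact you supply the monotonicity and recursiveness checks that the paper leaves to the reader with ``it is easy to check.''
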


\begin{proof}
Suppose first $\mA\models\Phi$. Thus $\exists$ has a winning strategy $\tau$ in the game $G(\mA,\Phi)$. Then $\mA\models\Phi^n$ for each $n$, since the $\exists$-player can simply follow the strategy $\tau$ in the game $G(\mA,\Phi^n)$ and win. For the converse, suppose $\mA\models\Phi^n$ for each $n$.
Let
$\Phi_m^{n+1}(\vec{x_0},\vec{y_0},...,\vec{x}_{n-1},\vec{y}_{n-1})$ be the first-order formula:

\[
\begin{array}{l}
\psi(\vec{x_0},\vec{y_0})\wedge\\
\psi(\vec{x_1},\vec{y_1})\wedge\bigwedge_{=\!(\vec{w}_0 ^p,y_{0,p})\in S}(\vec{w}_0 ^p=\vec{w}_1 ^p\to y_{0,p}=y_{1,p})\wedge\\
\psi(\vec{x_2},\vec{y_2})\wedge
\bigwedge_{i\in\{0,1\}}\bigwedge_{=\!(\vec{w}_0^p,y_{0,p})\in S}
(\vec{w}_i ^p=\vec{w}_2 ^p\to y_{i,p}=y_{2,p})\wedge\\
\cdots\\
\cdots\\
\forall\vec{x}_{n-1}\exists\vec{y}_{n-1}(\\
\psi(\vec{x}_{n-1},\vec{y}_{n-1})\wedge  \bigwedge_{i=0}^{n-1} \bigwedge_{=\!(\vec{w}_0^p,y_{0,p})\in S}(\vec{w}_i ^p=\vec{w}_{n-1} ^p\to y_{i,p}=y_{n-1,p})    ))\wedge\\
\cdots\\
\cdots\\
\forall\vec{x}_{m}\exists\vec{y}_{m}(\\
\psi(\vec{x}_{m},\vec{y}_{m})\wedge  \bigwedge_{i=0}^{m-1} \bigwedge_{=\!(\vec{w}_0^p,y_{0,p})\in S}(\vec{w}_i ^p=\vec{w}_{m} ^p\to y_{i,p}=y_{m,p})    )\ldots)\\
\end{array}
\]

%
%
%

The strategy of $\exists$ in the game $G(\mA,\Phi)$ is the following:
\begin{description}

\item[$(*)$]If the game position is $(\vec{a_0},\vec{b_0},...,\vec{a}_{n-1},\vec{b}_{n-1})$ then for each $m$ we have $$\mA\models\Phi_m^n(\vec{a_0},\vec{b_0},...,\vec{a}_{n-1},\vec{b}_{n-1}).$$ 

\end{description}
It is easy to  check, using the recursive saturation,  that $\exists$ can play according to this strategy and that she wins this way.\end{proof}

\begin{corollary}
If $\mA$ is a countable recursively saturated (or finite) model, then \[\mA\models\phi\leftrightarrow\bigwedge_n\Phi^0_n.\]
\end{corollary}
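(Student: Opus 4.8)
The plan is to derive this directly from the two preceding propositions, chaining the equivalences they establish. First I would fix a countable recursively saturated (or finite) model $\mA$ and treat the two directions of the biconditional separately, since $\phi$, $\Phi$ and the approximations $\Phi^n$ are exactly the objects appearing in Proposition~\ref{17} and Proposition~\ref{recsat}.

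For the forward implication I would assume $\mA\models\phi$. Proposition~\ref{17} gives $\phi\models\Phi$, hence $\mA\models\Phi$; then Proposition~\ref{recsat}, which applies precisely because $\mA$ is recursively saturated or finite, yields $\mA\models\bigwedge_n\Phi^n$. For the converse I would assume $\mA\models\bigwedge_n\Phi^n$, apply Proposition~\ref{recsat} in the other direction to obtain $\mA\models\Phi$, and finally invoke the second clause of Proposition~\ref{17}, namely that $\Phi\models\phi$ holds in countable models, to conclude $\mA\models\phi$. Combining the two directions gives $\mA\models\phi\leftrightarrow\bigwedge_n\Phi^n$.

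The only subtlety, and the closest thing to an obstacle, is confirming that the single hypothesis of the corollary simultaneously meets the requirements of both auxiliary results. Proposition~\ref{recsat} is stated exactly for recursively saturated or finite models, so it applies verbatim in both cases. For Proposition~\ref{17} the relevant clause needs $\mA$ to be countable; this holds in the recursively saturated case by hypothesis and in the finite case because every finite model is countable. Thus no separate treatment of the finite case is required, and the biconditional follows at once.
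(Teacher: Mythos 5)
Your proof is correct and is exactly the argument the paper intends: the paper's own proof is just the citation ``By Propositions \ref{17} and \ref{recsat},'' and you have simply unfolded that chaining of $\phi\models\Phi$, $\Phi\models\phi$ (in countable models), and the equivalence $\Phi\leftrightarrow\bigwedge_n\Phi^n$ over recursively saturated or finite models. Your check that a finite model is automatically countable, so both propositions apply under the single hypothesis, is a sensible detail the paper leaves implicit.
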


\begin{proof} By Propositions \ref{17} and \ref{recsat}.
\end{proof}














\begin{lemma}[Transitivity of deduction]\label{ded1}Suppose $\phi_1,...,\phi_n$,$\psi_1,...,\psi_m$ and $\theta$ are sentences of dependence logic.
If $\{\phi_1,...,\phi_n\}\vdashd\psi_i$ for $i=1,...,m$, and $\{\psi_1,...,\psi_m\}\vdashd\theta$, then $\{\phi_1,...,\phi_n\}\vdashd\theta$.
\end{lemma}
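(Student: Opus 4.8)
The plan is to prove this transitivity property by formalizing the intuition that one can splice derivations together: wherever the derivation of $\theta$ from $\{\psi_1,\dots,\psi_m\}$ invokes a hypothesis $\psi_i$ as a leaf, we graft on top of that leaf the entire derivation of $\psi_i$ from $\{\phi_1,\dots,\phi_n\}$. Since all the formulas involved are \emph{sentences}, they have no free variables, and so the variable-restriction side conditions (Conditions 3 and 4, on $\forall$ I and $\exists$ E) cannot be violated by this substitution: no free variable of any $\phi_j$ or $\psi_i$ can be captured, because there are none.

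First I would set up the formal apparatus of derivations as finite labelled trees whose leaves are either discharged assumptions or members of the ambient assumption set, and whose internal nodes are instances of the rules in Figure~\ref{ded} together with rules 1--8. Given the derivation $\mathcal{E}$ witnessing $\{\psi_1,\dots,\psi_m\}\vdashd\theta$, I would replace each leaf of $\mathcal{E}$ labelled by some $\psi_i$ that is used as an (undischarged) assumption by the root of a fresh copy of the derivation $\mathcal{D}_i$ witnessing $\{\phi_1,\dots,\phi_n\}\vdashd\psi_i$. The resulting tree $\mathcal{E}'$ has as its undischarged leaves exactly the sentences among $\{\phi_1,\dots,\phi_n\}$ (plus whatever identity axioms are used), and its root is still $\theta$, so it establishes $\{\phi_1,\dots,\phi_n\}\vdashd\theta$ provided it is a legal derivation.

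The only thing requiring genuine verification is that grafting preserves legality, i.e.\ that no side condition is newly violated. The conditions of concern are those that restrict which variables may appear free in non-discharged assumptions used in a subderivation ($\forall$ I Condition~3, $\exists$ E Condition~4) and the discharge bookkeeping of $\neg$ I, $\vee$ E, and $\exists$ E. For the variable conditions, one observes that every leaf introduced by the grafting is labelled by a sentence from $\{\phi_1,\dots,\phi_n\}$, and a sentence has $\Fr=\emptyset$; hence it contributes no free variable to any assumption-set below it, and the eigenvariable conditions of any $\forall$ I or $\exists$ E node of $\mathcal{E}$ that dominates a graft point remain satisfied. For the discharge conditions, since each grafted copy $\mathcal{D}_i$ is self-contained (its internally discharged assumptions stay internal to that copy, and are disjoint from the discharge brackets of $\mathcal{E}$), no discharging rule in $\mathcal{E}$ accidentally captures an assumption of $\mathcal{D}_i$ and vice versa; one may rename any clashing bound or discharged variables in the copies beforehand to be safe.

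The main obstacle I anticipate is purely bureaucratic rather than conceptual: making the tree-surgery argument precise enough to guarantee that the eigenvariable side conditions on $\forall$ I and $\exists$ E survive, and that the discharge annotations of the two derivations do not interfere. Because everything in sight is a sentence, the substantive danger of variable capture is absent, so the proof reduces to a careful induction on the structure of $\mathcal{E}$ verifying that replacing a $\psi_i$-leaf by $\mathcal{D}_i$ keeps each rule application valid. I would phrase this as a straightforward induction on the length (number of nodes) of the derivation $\mathcal{E}$, noting explicitly at the $\forall$ I and $\exists$ E cases that the freshly attached assumptions are variable-free and therefore harmless.
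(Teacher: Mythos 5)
Your proof is correct and follows essentially the same route as the paper: the paper's own argument is exactly the grafting construction, replacing each use of an assumption $\psi_i$ in the derivation of $\theta$ by the derivation of $\psi_i$ from $\{\phi_1,\dots,\phi_n\}$. Your additional verification that the eigenvariable and discharge side conditions survive the splice (because all formulas involved are sentences) is a detail the paper leaves implicit, but it is the right observation and does not change the approach.
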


\begin{proof}
The deduction of $\theta$ from $\{\phi_1,...,\phi_n\}$ is obtained from the dedction of $\theta$ from $\{\psi_1,...,\psi_m\}$ by replacing each application of the assumption $\psi_i$ by the deduction of $\psi_i$ from $\{\phi_1,...,\phi_n\}$. \qed
\end{proof}

We are now ready to prove the main result of this article.
\begin{theorem} Let $T$ be a set of sentences of dependence logic and
$\phi\in \FO$. Then the following are equivalent:
\begin{description}
\item[(I)] $T\models\phi$
\item[(II)] $T\vdashd\phi$
\end{description}
\end{theorem}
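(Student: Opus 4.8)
The plan is to prove the equivalence by establishing both directions, with the easy direction coming from soundness and the substantive direction being a contrapositive argument that leverages all the machinery assembled in the preceding sections. The direction (II)$\Rightarrow$(I) is immediate: if $T\vdashd\phi$, then the Soundness Theorem (Proposition in Section 4) gives $T\models\phi$ directly, since only finitely many sentences of $T$ are used in any derivation.

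For the hard direction (I)$\Rightarrow$(II), I would argue the contrapositive: assuming $T\not\vdashd\phi$, I would construct a model $\mA$ witnessing $T\not\models\phi$. First I would reduce to the case where each $\psi\in T$ is already in the normal form \eqref{normalform}, which is legitimate because the Proposition of Section 5.2 shows every sentence is provably equivalent to such a normal form, and Lemma \ref{ded1} (transitivity of deduction) lets me replace $T$ by the set of these normal forms without changing provability. Next I would form $T^*$, the set of all first-order approximations $\Phi^n$ of the game expressions $\Phi$ attached to the sentences of $T$ (as in Definition \ref{approximation}). The crucial syntactic link is Proposition \ref{Prop16}: each $\psi\in T$ satisfies $\psi\vdashd\Phi^n$ for all $n$, so $T\vdashd T^*$; hence by transitivity any first-order consequence of $T^*$ is a $\df$-consequence of $T$, and in particular $T^*\cup\{\neg\phi\}$ must be first-order consistent, for otherwise $T^*\vdashf\phi$ would yield $T\vdashd\phi$, contradicting the assumption.

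Having established first-order consistency of $T^*\cup\{\neg\phi\}$, I would invoke ordinary first-order completeness together with the existence of recursively saturated models (the Proposition cited from \cite{MR0403952}) to obtain a \emph{countable recursively saturated} model $\mA$ with $\mA\models T^*\cup\{\neg\phi\}$. The final step is to climb back from the approximations to the game expressions and then to the original $\df$-sentences: since $\mA\models\Phi^n$ for every $n$ and every game expression $\Phi$ arising from $T$, Proposition \ref{recsat} gives $\mA\models\Phi$ for each such $\Phi$ (this is exactly where recursive saturation is indispensable), and then Proposition \ref{17} — specifically its ``countable models'' clause $\Phi\models\psi$ — yields $\mA\models\psi$ for each $\psi\in T$. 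Thus $\mA\models_{\{\emptyset\}} T$ while $\mA\models\neg\phi$, so $T\not\models\phi$, completing the contrapositive.

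The main obstacle I anticipate is bookkeeping the chain of equivalences and implications so that no step silently requires more than it has: in particular, the step from $\bigwedge_n\Phi^n$ back to $\Phi$ is valid \emph{only} over recursively saturated (or finite) models, so the whole argument hinges on arranging that the model produced by first-order completeness is of this special kind rather than an arbitrary model of $T^*\cup\{\neg\phi\}$. A secondary subtlety is ensuring that $\phi$ being purely first-order is genuinely used — it is what allows $\neg\phi$ to be a legitimate first-order sentence that can be thrown into the consistent set $T^*\cup\{\neg\phi\}$, and it is why the reduction to first-order model existence works at all.
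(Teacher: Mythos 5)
Your proposal is correct and follows essentially the same route as the paper's own proof: soundness for (II)$\Rightarrow$(I), and for the converse a contrapositive argument passing to the normal forms, forming the set $T^*$ of first-order approximations, using transitivity of deduction to get first-order consistency of $T^*\cup\{\neg\phi\}$, and then recovering $\mA\models T$ from a countable recursively saturated model via Propositions \ref{recsat} and \ref{17}. Your write-up is in fact slightly more explicit than the paper's about the normal-form reduction and about where recursive saturation and the first-orderness of $\phi$ are used.
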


\begin{proof}
Suppose first (I) but $T\not\vdashd\phi$. Let $T^*$ consist of all the approximations of the dependence sentences in $T$. Since the approximations are provable from the original sentences, Lemma~\ref{ded1} gives $T^*\not\vdashd\phi$.  Note that $T^*\cup\{\neg\phi\}$ is a first order theory. Clearly,   $T^*\cup\{\neg\phi\}$ is deductively consistent in first order logic, since we have all the first order inference rules as part of our deduction system. Let $\mA$ be a countable recursively saturated model of this theory. By Lemma~\ref{recsat}, $\mA\models T\cup\{\neg\phi\}$, contradicting (I). We have proved (II).  (II) implies (I) by the Soundness Theorem.
\end{proof}

\section{Examples and open questions}


In this section we present  some  examples   and open problems.

\begin{example} This example is an application of the dependence distribution rule in a context where continuity of functions is being discussed.
\begin{enumerate}
\item Given $\epsilon,x,y$ and $f$.
\item If $\epsilon>0$, then  there is $\delta>0$ depending only on $\epsilon$ such that  if $|x-y|<\delta$, then $|f(x)-f(y)|<\epsilon$.
\item Therefore,  there is $\delta>0$ depending only on $\epsilon$ such that if $\epsilon>0$ and $|x-y|<\delta$, then $|f(x)-f(y)|<\epsilon$.
\end{enumerate}
\end{example}

\begin{example} This is an example of an application of the  dependence elimination rule, again in a context where continuity of functions is being contemplated.
\begin{enumerate}

\item Assume that for every $x$ and every $\epsilon>0$ there is $\delta>0$ depending only on $\epsilon$ such that for all $y$, if $|x-y|<\delta$, then $|f(x)-f(y)|<\epsilon$.

\item Therefore, for every $x$ and every $\epsilon>0$ there is $\delta>0$ such that for all $y$, if $|x-y|<\delta$, then $|f(x)-f(y)|<\epsilon$, and moreover, for another $x'$ and $\epsilon'>0$  there is $\delta'>0$ such that for all $y'$, if $|x'-y'|<\delta'$, then $|f(x')-f(y')|<\epsilon$, and if $\epsilon=\epsilon'$, then $\delta=\delta'$.
\end{enumerate}
\end{example}

\begin{example} This is a different type of example of the use of the dependence elimination rule. Let $\phi$ be the following sentence:
\[ \phi: \forall x \exists y\exists z(\dep(y,z)\wedge (x=z \wedge y\neq c) ), \]
where $c$ is a constant symbol. It is straightforward to verify that $\mA\models \phi$ iff $A$ is infinite. The idea is that
the dependence atom $\dep(y,z)$ forces the interpretation of $y$ to encode an injective function from $A$ to $A$ that is not surjective (since $y\neq c$ must also hold). On the other hand, for the approximations $\Phi^n$, it holds that  $\mA\models \Phi^n$ iff $|A|\ge n+1$:  for  $\Phi^1$  
\[ \Phi^1:= \forall x_0 \exists y_0\exists z_0(x_0=z_0 \wedge y_0\neq c) \]
this is immediate, and in general, the claim can be proved using induction on $n$.
\end{example}

We end this section with some  open questions.

\begin{enumerate}
\item Our complete axiomatization, as it is, applies only to sentences. Is the same axiomatization complete also with respect to formulas?
\item Our natural deduction makes perfect sense also as a way to derive non first order sentences of dependence logic. What is the modified concept (or concepts) of a structure relative to which this is complete?
\item Are dependence distribution and dependence delineation  really necessary? Do we lose completeness if one or both of them are dropped? Is there other redundancy in the rules?
\item Do similar axiom systems yield Completeness Theorems in other dependence logics, such as modal dependence logic, or dependence logic with intuitionistic implication? 
\item Is there a similar deductive system for first order consequences of sentences of independence logic introduced in \cite{GV}? In principle this should be possible.  One immediate complication that arises is that independence logic does not satisfy downward closure (the analogue of Proposition \ref{Downward closure}), and hence, e.g., the rule  $\forall $ E 
is not sound for independence logic. For example, the formula  
\(\forall x \forall y (x\bot y),  \)
is universally true but $x\bot y$ certainly is not. 

\end{enumerate}

\vspace{1cm} \noindent
\begin{tabular}{l}
{\it Juha Kontinen}\\
Department of Mathematics and Statistics\\
University of Helsinki, Finland\\
\tt{juha.kontinen@helsinki.fi}\\
\\
{\it Jouko V\"a\"an\"anen}\\
Department of Mathematics and Statistics\\
University of Helsinki, Finland\\
and\\
Insitute for Logic, Language
 and Computation\\
University of Amsterdam,
 The Netherlands\\
\tt{jouko.vaananen@helsinki.fi}\\
\end{tabular}


\begin{thebibliography}{1}

\bibitem{MR0465788}
Jon Barwise.
\newblock Some applications of {H}enkin quantifiers.
\newblock {\em Israel J. Math.}, 25(1-2):47--63, 1976.

\bibitem{MR0403952}
Jon Barwise and John Schlipf.
\newblock An introduction to recursively saturated and resplendent models.
\newblock {\em J. Symbolic Logic}, 41(2):531--536, 1976.

\bibitem{ADJK}
Arnaud Durand and Juha Kontinen.
\newblock Hierarchies in dependence logic.
\newblock {\em arXiv:1105.3324v1}.

\bibitem{btxdoc}
Pietro Galliani.
\newblock Entailment semantics for independence logic.
\newblock Manuscript, 2011.

\bibitem{GV}
Erich Gr\"adel and Jouko V\"a\"an\"anen.
\newblock Dependence and independence.
\newblock {\em Studia Logica}, to appear.

\bibitem{MR1410063}
Jaakko Hintikka.
\newblock {\em The principles of mathematics revisited}.
\newblock Cambridge University Press, Cambridge, 1996.
\newblock With an appendix by Gabriel Sandu.

\bibitem{MR0209138}
Lars Svenonius.
\newblock On the denumerable models of theories with extra predicates.
\newblock In {\em Theory of {M}odels ({P}roc. 1963 {I}nternat. {S}ympos.
  {B}erkeley)}, pages 376--389. North-Holland, Amsterdam, 1965.

\bibitem{MR2351449}
Jouko V{\"a}{\"a}n{\"a}nen.
\newblock {\em Dependence logic}, volume~70 of {\em London Mathematical Society
  Student Texts}.
\newblock Cambridge University Press, Cambridge, 2007.

\bibitem{MR0409106}
Robert Vaught.
\newblock Descriptive set theory in {$L_{\omega_1\omega }$}.
\newblock In {\em Cambridge {S}ummer {S}chool in {M}athematical {L}ogic
  ({C}ambridge, {E}ngland, 1971)}, pages 574--598. Lecture Notes in Math., Vol.
  337. Springer, Berlin, 1973.

\end{thebibliography}
\end{document}